\setlist[enumerate,1]{label=(\roman*)}
\numberwithin{equation}{section}
\declaretheoremstyle[
  shaded={bgcolor=\thmcolor}
]{plain}
\declaretheoremstyle[
  headfont=\normalfont\bfseries,
  bodyfont=\normalfont,
  shaded={bgcolor=\defcolor}
]{noital}
\declaretheoremstyle[
  headfont=\normalfont\bfseries,
  bodyfont=\normalfont,
]{noital}
\declaretheorem[style=plain,numberwithin=section,name=Theorem]{theorem}
\declaretheorem[style=plain,sibling=theorem,name=Proposition]{proposition}
\declaretheorem[style=plain,sibling=theorem,name=Lemma]{lemma}
\declaretheorem[style=plain,sibling=theorem,name=Conjecture]{conjecture}
\declaretheorem[style=plain,sibling=theorem,name=Claim]{claim}
\declaretheorem[style=plain,sibling=theorem,name=Question]{question}
\declaretheorem[style=plain,numbered=no,name=Theorem]{theorem-n}
\declaretheorem[style=plain,numbered=no,name=Proposition]{proposition-n}
\declaretheorem[style=plain,numbered=no,name=Lemma]{lemma-n}
\declaretheorem[style=plain,numbered=no,name=Corollary]{corollary-n}
\declaretheorem[style=plain,numbered=no,name=Conjecture]{conjecture-n}
\declaretheorem[style=plain,numbered=no,name=Claim]{claim-n}
\declaretheorem[style=plain,numbered=no,name=Fact]{fact-n}
\declaretheorem[style=plain,numbered=no,name=Open Problem]{openproblem-n}
\declaretheorem[style=plain,numbered=no,name=Question]{question-n}
\declaretheorem[style=noital,sibling=theorem,name=Remark]{remark}
\declaretheorem[style=noital,sibling=theorem,name=Definition]{definition}
\declaretheorem[style=noital,numbered=no,name=Remark]{remark-n}
\declaretheorem[style=noital,numbered=no,name=Definition]{definition-n}
\declaretheorem[style=noital,numbered=no,name=Construction]{construction-n}
\declaretheorem[style=noital,numbered=no,name=Example]{example-n}
\newcommand{\defined}{\mathrel{\coloneqq}}
\newcommand{\st}{\mathbin{\colon}}
\DeclarePairedDelimiter{\set}{\lbrace}{\rbrace}
\newcommand{\emptyset}{\varnothing}
\newcommand{\union}{\mathbin{\cup}}
\newcommand{\inter}{\mathbin{\cap}}
\DeclarePairedDelimiterX{\abs}[1]
  {\lvert}{\rvert}{\ifblank{#1}{\,\cdot\,}{#1}}
\DeclarePairedDelimiterX{\norm}[1]
  {\lVert}{\rVert}{\ifblank{#1}{\,\cdot\,}{#1}}
\DeclarePairedDelimiterX{\inner}[2]
  {\langle}{\rangle}{\ifblank{#1}{\,\cdot\,}{#1},\ifblank{#2}{\,\cdot\,}{#2}}
\DeclareMathDelimiter{\given}
  {\mathbin}{symbols}{"6A}{largesymbols}{"0C}
\DeclareMathOperator{\Prob}{\mathbb{P}}
\DeclarePairedDelimiterXPP{\prob}[1]
  {\Prob}{\lparen}{\rparen}{}
  {\renewcommand{\given}{\nonscript\;\delimsize\vert\nonscript\;\mathopen{}}#1}
\DeclareMathOperator{\Expec}{\mathbb{E}}
\DeclarePairedDelimiterXPP{\expec}[1]
  {\Expec}{\lparen}{\rparen}{}
  {\renewcommand{\given}{\nonscript\;\delimsize\vert\nonscript\;\mathopen{}}#1}
\DeclareMathOperator{\Var}{Var}
\DeclarePairedDelimiterXPP{\var}[1]
  {\Var}{\lparen}{\rparen}{}
  {\renewcommand{\given}{\nonscript\;\delimsize\vert\nonscript\;\mathopen{}}#1}
\DeclareMathOperator{\Cov}{Cov}
\DeclarePairedDelimiterXPP{\cov}[2]
  {\Cov}{\lparen}{\rparen}{}{#1,#2}
\newcommand{\sseq}{\subseteq}
\newcommand{\cE}{\mathcal{E}}
\newcommand{\cH}{\mathcal{H}}
\newcommand{\cP}{\mathcal{P}}
\newcommand{\conn}[1]{\xrightarrow{#1}}
\newcommand{\biconn}[1]{\xleftrightarrow{#1}}
\newcommand{\bbg}{\Tilde{G}}
\newcommand{\bbq}{\Tilde{Q}}
\newcommand{\bbd}{\Tilde{D}}
\newcommand{\bbh}{\Tilde{\cH}}
\newcommand{\conj}[1]{\text{BBC}(#1)}
\newcommand{\mone}{E_2^{T_2}(G_2)}
\newcommand{\mtwo}{E_1(G_1)}
\newcommand{\mthree}{E_4^{T_4}(\cH_4)}
\newcommand{\mfour}{E_5(\cH_5)}
\newcommand{\mfive}{E_6^{T_6,F_6}(D_6)}
\newcommand{\msix}{E_7(D_7)}
\newcommand{\czero}{\conj{E_0^T,G}}
\newcommand{\cone}{\conj{E_2^{T_2},G_2}}
\newcommand{\ctwo}{\conj{E_1,G_1}}
\newcommand{\cthree}{\conj{E_4^{T_4},\cH_4}}
\newcommand{\cfour}{\conj{E_5,\cH_5}}
\newcommand{\cfive}{\conj{E_6^{T_6,F},D_6}}
\newcommand{\csix}{\conj{E_7,D_7}}
\newcommand{\connone}{\biconn{\mone}}
\newcommand{\conntwo}{\biconn{\mtwo}}
\newcommand{\connthree}{\biconn{\mthree}}
\newcommand{\connfour}{\biconn{\mfour}}
\newcommand{\connfive}{\conn{\mfive}}
\newcommand{\connsix}{\conn{\msix}}
\declaretheorem[style=plain,sibling=theorem,name=Model]{model}
\begin{document}

\title{The bunkbed conjecture is not robust to generalisation}

\author{Lawrence Hollom}
\address{Department of Pure Mathematics and Mathematical Statistics (DPMMS), University of Cambridge, Wilberforce Road, Cambridge, CB3 0WA, United Kingdom}
\email{lh569@cam.ac.uk}



\begin{abstract}
  The bunkbed conjecture, which has featured in the folklore of probability theory since at least 1985, concerns bond percolation on the product graph $G\Box K_2$. 
  We have two copies $G_0$ and $G_1$ of $G$, and if $x^{(0)}$ and $x^{(1)}$ are the copies of a vertex $x\in V(G)$ in $G_0$ and $G_1$ respectively, then edge $x^{(0)}x^{(1)}$ is present.
  The conjecture states that, for vertices $u,v\in V(G)$, percolation from $u^{(0)}$ to $v^{(0)}$ is at least as likely as percolation from $u^{(0)}$ to $v^{(1)}$.
  While the conjecture is widely expected to be true, having attracted significant attention, a general proof has not been forthcoming.
  
  In this paper we consider three natural generalisations of the bunkbed conjecture; to site percolation, to hypergraphs, and to directed graphs.
  Our main aim is to show that all these generalisations are false, and to this end we construct a sequence of counterexamples to these statements.
  However, we also consider under what extra conditions these generalisations might hold, and give some classes of graph for which the bunkbed conjecture for site percolation does hold.
\end{abstract}

\maketitle


\section{Introduction}
\label{sec:intro}

In 1985, Kasteleyn posed the bunkbed conjecture, which then circulated as an intriguing problem in percolation theory (see \cite[Remark 5]{KahnBerg2001}).
Interest in the problem has waxed and waned over time, but, especially in recent years, the conjecture has attracted significant attention.
A feature of the problem which makes it particularly appealing is its apparent simplicity and how, as has been noted by several authors, it appears to be intuitively clear.

We now provide some definitions and state our main results before returning to the history of the problem.
While most work on the topic is concerned with simple graphs, we work in a more general setting, and thus define the bunkbed graph in several cases, as follows.

\begin{definition}[Bunkbed graph]
    For $Q$ a graph, digraph, or hypergraph, we can define the bunkbed structure $\bbq$ as follows.
    Take two copies, $Q_0$ and $Q_1$, of $Q$, labelling the copy of vertex $v$ and edge $e$ in $Q_i$ as $v^{(i)}$ and $e^{(i)}$ respectively.
    We will call such edges $e^{(i)}$ \emph{horizontal edges}.
    $E(\bbq)$ then consists of $E(Q_0)\union E(Q_1)$ along with an edge between $v^{(0)}$ and $v^{(1)}$ for each $v\in V(Q)$, which will be called a \emph{vertical edge}.
    In the case of directed graphs, this vertical edge will be bidirectional.
    $Q_0$ will be called the \emph{lower bunk} and $Q_1$ the \emph{upper bunk} of $\bbq$.
\end{definition}

We will be concerned in this paper with percolation on the bunkbed graph under various uniform models of percolation.
We will consider two types of model: site percolation and bond percolation.
In both cases, we will ultimately take a family $\cE(\bbq)$ of subsets of $E(\bbq)$, which we will sample from uniformly at random to produce some set $X\sseq E(\bbq)$ of edges which will be retained.
We now define these concepts formally.

\begin{definition}[Percolation model]
    If $Q$ is a graph, digraph, or hypergraph, then a \emph{percolation model} is a family $\cE(\bbq)$ is a family of subsets of $E(\bbq)$.
    We perform percolation by selecting an element $X\in \cE(\bbq)$ uniformly at random, and then retain precisely those edges in $X$.
    For two vertices $x,y\in V(\bbq)$, we write
    \begin{align*}
        \Prob(x\conn{\cE(\bbq)}y) \qquad \text{ or } \qquad \Prob(x\biconn{\cE(\bbq)}y)
    \end{align*}
    for the probability that there is a directed path of retained edges from $x$ to $y$ in the former, or a bidirectional path in the latter case, in $\bbq$ under model $\cE(\bbq)$.
    When $Q$ is a hypergraph, a path in $Q$ is a sequence of vertices in which consecutive vertices are both members of some hyperedge in $Q$.
\end{definition}

We can then use this definition to define site percolation as a special case of bond percolation.
We will only consider site percolation on graphs, so for simplicity we only define this case.

\begin{definition}[Site percolation model]
    If $G$ is a graph, then a percolation model $\cE(\bbg)$ is said to be a \emph{site percolation model} if, for every $X\in\cE(\bbg)$ there is some set $U\sseq V(\bbg)$, which we shall refer to as the set of \emph{open vertices}, such that $X$ consists of precisely those edges with both ends in $U$.
\end{definition}

We consider site percolation as a special case of our general (bond) percolation model for convenience; it will be useful for us to still have all vertices present, even if they are in no edges.

We remark that in much other work on the topic, percolation is phrased in terms of sampling edges independently at random with some probability $p$.
Our sampling as above is equivalent to this in the case that $\cE(\bbq)$ is the powerset $\cP(E(\bbq))$, and $p=1/2$.
This restriction to $p=1/2$ turns out to not be important, as will be discussed later.

We now provide a very general statement of the bunkbed conjecture in terms of a general model, before we proceed to consider several special cases.

\begin{definition}[Bunkbed conjecture for a general model $\cE$]
\label{conj:bunkbed-general}
    Let $Q$ be a graph, hypergraph, or digraph, $\cE(\bbq)$ be a model of percolation on $\bbq$ as defined above.
    \emph{The bunkbed conjecture for model $\cE$} states that for any $Q$, and any vertices $u,v\in V(Q)$, we have
    $$\Prob(u^{(0)}\conn{\cE(\bbq)}v^{(0)})\geq \Prob(u^{(0)}\conn{\cE(\bbq)}v^{(1)}).$$
    Let $\conj{\cE,Q}$ denote the above statement.
\end{definition}

There are several different statements which are referred to as `the bunkbed conjecture', and they are used somewhat interchangeably in the literature; Rudzinski and Smyth \cite{Rudzinski2016} detail a few of these statements. 
One such form of the conjecture, which fits the notation we have described above, concerns the following model of percolation.

\begin{model}[$E_0^T$, the standard bunkbed model]
\label{model:standard}
    For an undirected graph $G$ and a fixed set $T\sseq V(G)$, a set $X\sseq E(\bbg)$ of horizontal edges is chosen uniformly at random, and retained along with precisely those vertical edges $v^{(0)}v^{(1)}$ such that $v\in T$. 
    Equivalently, each horizontal edge of $\bbg$ is retained independently at random with probability $0.5$.
\end{model}

In this setting, $\czero$ is equivalent to many statements referred to as `the bunkbed conjecture'.
When stated, the conjecture is often given in a slightly more general form, retaining each horizontal edge of $\bbg$ with probability $p$ for some fixed $p\in(0,1)$, as in for example the work of H\"{a}ggstr\"{o}m \cite{haggstrom1998conjecture,haggstrom2003probability}.
However, as proved by Rudzinski and Smyth \cite[Corollary 1.7]{Rudzinski2016}, the case for any specific probability $p$ implies the general case, and so the truth of $\czero$ would suffice to prove the more general conjecture.

The conjecture $\conj{\cE,G}$ is false for general percolation models $\cE$; this is unsurprising as our definition has no requirement of symmetry, and so permits many pathological models.
However, we will show that $\conj{\cE,G}$ is also false for several highly symmetric models $\cE$, some of which have been conjectured to be true by various authors.
This is particularly surprising as much of the same intuition which tells us that $\czero$ should be true also applies to these other -- false -- cases.

The conjecture has attracted significant attention over the last few years, being resolved in progressively more general special cases.
One initial sequence of results by de Buyer and then van Hintum and Lammers \cite{Buyer2016, Buyer2018, VanHintum2019} resolved the case of the complete graph.
This was then further generalised by Richthammer \cite{Richthammer2022} to a broader class of highly symmetric graphs.
Linusson \cite{linusson2011,linusson2019} took a different route, and both considered generalisations of the conjecture and related the conjecture to a problem concerning randomly directed graphs.
This work was then built upon by Hutchcroft, Nizi\'{c}-Nikolac, and Kent \cite{hutchcroft2023bunkbed}, who resolved the conjecture in the case of general $G$ and probability $p\uparrow 1$.
A different angle on the $p\uparrow 1$ limit was later considered by the author \cite{hollom2024new}, proving the conjecture holds in this case by working with graph cuts.

We now turn our attention to more general models; it is natural in an attempt to prove the bunkbed conjecture to try to generalise the statement of the problem, in the hope that this may be conducive to an inductive proof.
Indeed, upon attacking the conjecture $\conj{E_0,G}$, a generalisation to hypergraphs soon arises as a tempting possibility.
Likewise, directed graphs allow for the possibility of attaching `gadgets' to a graph to easily generalise the conjecture further, and on generalising to site percolation, one no longer needs to consider paths which return to an already-visited pair of vertices in the bunkbed graph.
However, our main result is the following, which says that all of these generalisations are in fact false.

\begin{theorem}
\label{thm:main}
    The generalised bunkbed conjecture is false for site percolation, hypergraphs, and digraphs.
    More precisely, there are counterexamples to the statement $\conj{\cE,Q}$ for each of the models $E_1,E_2^T,E_4^T,E_5,E_6^{T,F},E_7$ to be discussed in \Cref{sec:site-percolation,sec:models}.
\end{theorem}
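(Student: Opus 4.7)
The approach is to exhibit an explicit counterexample for each of the six models. A natural starting point is the hypergraph case: a hyperedge simultaneously links many vertices, introducing correlations among potential paths that cannot occur under independent bond percolation. This correlation can be tuned so that the coupled ``up through a vertical edge then back across the top bunk'' route ending at $v^{(1)}$ becomes strictly more likely than the purely lower-bunk route ending at $v^{(0)}$. The plan is to identify a small hypergraph $\cH$ with vertices $u,v$ and compute the connection probabilities exactly, choosing hyperedge sizes and incidences so that enough horizontal hyperedges are ``wasted'' short-circuiting paths on the lower bunk while a single well-placed hyperedge together with a vertical edge yields a disproportionately cheap up-transfer. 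The same structural idea should then adapt to $E_4^{T_4}$ by fixing the status of an appropriate set $T_4$ of vertical edges so as to lock in the advantage of the up-transfer route.

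Once a hypergraph counterexample is in hand, counterexamples for the other models can be obtained by simulation. For the site percolation models $E_1$ and $E_2^{T_2}$, each $k$-hyperedge of the hypergraph counterexample is replaced by a small graph gadget with a central ``firing'' vertex: when the centre vertex is open the gadget realises full connectivity among its $k$ boundary vertices, and when closed it realises none. Since the open/closed status of each central vertex is an independent Bernoulli, this faithfully simulates independent retention of the hyperedges, and the counterexample property is inherited. One must be careful to arrange the boundary vertices (and any auxiliary site probabilities) so that only the intended event is simulated and no extra connectivity leaks into the lower bunk.

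For the digraph models $E_6^{T_6,F_6}$ and $E_7$, the asymmetry of directed edges together with the bidirectional vertical edges permits directly engineered gadgets that bias routing to the upper bunk. The plan is either to attach a small directed gadget near $u$ so that transits from $u^{(0)}$ are channelled through a vertical edge into the upper bunk before reaching $v$, or to re-use the hyperedge-replacement trick with all the internal edges of each gadget directed so that the ``all-or-nothing'' firing behaviour of a hyperedge is reproduced. The distinction between the $E_6^{T,F}$ and $E_7$ variants is accommodated by whether certain vertical and ``forward'' edges are fixed or sampled, with small adjustments to the gadget in each case.

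The main obstacle is not the construction of candidate counterexamples but the verification that the strict inequality indeed flips. Each verification reduces to a finite exact calculation of two sums over configurations, but the smallest working examples are delicate and the enumerations nontrivial; the subtle point is to ensure that each simulating gadget behaves correctly across \emph{both} bunks simultaneously, so that no gadget contributes extra probability mass to $u^{(0)}\conn{}v^{(0)}$ in a way that restores the conjectured inequality. Where possible I would arrange the reductions so that the $u^{(0)}\to v^{(0)}$ and $u^{(0)}\to v^{(1)}$ probabilities in the new model are exact affine functions of those in the old, making the transfer of counterexamples automatic.
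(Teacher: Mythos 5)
There is a genuine gap: your proposal is a research plan rather than a proof, and the central object --- an explicit structure on which the inequality actually reverses --- is never produced. The sentence ``identify a small hypergraph $\cH$ \dots choosing hyperedge sizes and incidences so that \dots the up-transfer becomes strictly more likely'' simply restates the theorem; nothing in the proposal guarantees such a hypergraph exists, and this is exactly where all the difficulty lies. For comparison, the paper's base case is a \emph{conditioned site percolation} example: a specific $9$-vertex graph $G_2$ with three prescribed posts, verified by exhaustive enumeration to give probabilities $12/64$ versus $13/64$; the hypergraph example is then obtained as (a modification of) the hypergraph \emph{dual} of $G_2$, and the digraph example by replacing each hyperedge with a small directed gadget. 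You propose to run this chain in the opposite direction, hypergraphs first, which is not fatal in itself, but your transfer from hypergraphs to \emph{unconditioned} site percolation via a ``central firing vertex'' gadget has a concrete problem you flag but do not resolve: in model $E_1$ every vertex of $\bbg$, including the gadget's boundary vertices, is independently open or closed, so you cannot force the boundary to be transparent and the gadget does not faithfully simulate an all-or-nothing hyperedge. The paper's dual correspondence avoids this precisely because in hypergraph bond percolation vertices are never deleted.

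The second missing ingredient is the mechanism for passing between conditioned and unconditioned models. You describe $E_4^T$ as obtained from the unconditioned model ``by fixing the status of an appropriate set $T_4$,'' but the logical work goes the other way: the conditioned models are where counterexamples are found, and one must then \emph{remove} the conditioning. The paper does this with a generalised mirror argument (its wall-event lemma): on the event that every $u$--$v$ path meets a quasi-post, the two connection probabilities are exactly equal, so it suffices to attach gadgets (blown-up posts, pendant $2$-edges, parallel edges) that force the conditioned behaviour with probability close to $1$ and then beat the error term $\Prob(A^C)$ against the gap $2^{-6}\Prob(B^C)$. Your plan contains no analogue of this lemma and no quantitative control of the failure events, so even granting a conditioned counterexample, the extension to $E_1$, $E_5$, and $E_7$ is not established.
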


In view of this general negative result, it is still interesting to consider for which graphs these variations of the bunkbed conjecture might be true.
To this end, we will also prove the following positive result in the case of site percolation and some highly structured graphs $G$.

We recall that a \emph{wheel} graph consists of a cycle $C$ plus an additional vertex connected by an edge to every vertex of $C$.

\begin{theorem}
\label{thm:simple}
    Let $G$ be a path, a cycle, or a wheel, and let $u,v\in V(G)$ be vertices. 
    Let $S\sseq V(\bbg)$ be selected uniformly at random amongst all such subsets.
    It is at least as likely that there is a path entirely in $S$ from $u^{(0)}$ to $v^{(0)}$ as it is that there is a path in $S$ from $u^{(0)}$ to $v^{(1)}$.
\end{theorem}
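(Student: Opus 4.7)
The plan is to treat the three graph classes separately via a common condition-and-reduce scheme, with the cycle case demanding the most delicate step.

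For paths, assume without loss of generality that $G = P_n$ with $u = v_a$ and $v = v_b$ for some $a \leq b$, and write $\Delta(a,b)$ for the target difference $\Prob(v_a^{(0)} \to v_b^{(0)}) - \Prob(v_a^{(0)} \to v_b^{(1)})$. I would condition on the joint openness of $(v_b^{(0)}, v_b^{(1)})$. The two matched cases contribute zero to $\Delta(a,b)$: if both vertices are open the vertical edge at $b$ identifies the two targets, while if both are closed both target probabilities vanish. In the two unmatched cases one of the targets is closed, and any simple witnessing path ending at the open $v_b^{(\eps)}$ cannot use any position beyond $b$, since doing so would require either traversing the closed $v_b^{(1-\eps)}$ or re-using the endpoint. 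This reduces the question to the same bunkbed inequality on the sub-path $P_{b-1}$ with target $v_{b-1}^{(\eps)}$, yielding the recurrence $\Delta(a,b) = \tfrac{1}{4}\Delta(a,b-1)$. The base case $\Delta(a,a) = \tfrac{1}{2} - \tfrac{1}{4} = \tfrac{1}{4}$ comes from $\Prob(v_a^{(0)} \to v_a^{(1)})$ being just the probability that both endpoints of the vertical edge at $a$ are open. Unwinding yields $\Delta(a,b) = 4^{-(b-a+1)} > 0$.

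For cycles, I would apply the same first conditioning on $(v_b^{(0)}, v_b^{(1)})$ to eliminate the bunk-symmetric contribution, and further condition on the source pair $(v_a^{(0)}, v_a^{(1)})$: by the bunk-swap symmetry, configurations with $v_a^{(1)}$ open (so that $v_a^{(0)}$ and $v_a^{(1)}$ are identified through the vertical edge at $a$) contribute zero. The residual situation has $v_a^{(1)}$ closed and exactly one of $v_b^{(0)}, v_b^{(1)}$ open. On the resulting graph, which is $\tilde{C}_n$ with one vertex removed, I would set up a transfer-matrix Markov chain walking around the cycle from $v_a$, tracking at each position the connectivity status of the two bunks relative to the fixed source. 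The cyclic closure is handled by taking an appropriate trace, and the initial asymmetry at position $a$ (source on bunk $0$ only) propagates around the cycle to give a non-negative contribution. This transfer-matrix step is the main obstacle of the theorem, since a cycle admits witnessing paths winding in two directions, so the clean path-exclusion that worked for paths no longer applies and a careful case analysis around the cyclic closure is needed.

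For wheels, I would condition on the openness of the two central vertices $(w^{(0)}, w^{(1)})$. When both are open, the spokes together with the vertical edge at $w$ put every open cycle-vertex on either bunk into the same cluster, so the two target events coincide and contribute equally. When both are closed, the wheel reduces to a cycle and the previous case applies. In each of the two asymmetric subcases, an open central vertex on one bunk collapses that bunk's cycle into a hub-and-spoke structure while the other bunk retains its cycle; this reduces the problem to a simpler cycle- or path-type computation with the extra hub contributing uniformly to both targets.
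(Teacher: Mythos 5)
Your path argument is correct, and it is a genuinely different route from the paper's: conditioning on the pair $(v_b^{(0)},v_b^{(1)})$ does kill the two matched cases, the unmatched cases do confine any simple witnessing path to positions at most $b$, and the recurrence $\Delta(a,b)=\tfrac14\Delta(a,b-1)$ with base case $\Delta(a,a)=\tfrac14$ is valid, giving the exact value $4^{-(b-a+1)}>0$. (The paper instead reduces the unconditioned model to a conditioned one in which every non-post vertex is open in exactly one bunk, and then invokes its wall-event lemma; your version is more computational but yields an exact formula.)

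The cycle and wheel cases, however, contain genuine gaps. For the cycle, your reduction to the residual case ($v_a^{(0)}$ open, $v_a^{(1)}$ closed, exactly one of $v_b^{(0)},v_b^{(1)}$ open) is fine, but everything of substance is then deferred to a transfer-matrix computation whose stated conclusion --- that the initial asymmetry ``propagates around the cycle to give a non-negative contribution'' --- is precisely the inequality to be proved; no state space, no matrices, and no positivity argument are supplied, and you yourself flag this as the main obstacle. The difficulty is real: since the other copy of $v_b$ is closed, reaching the surviving copy amounts to reaching \emph{either} endpoint of the cut-open path in the corresponding bunk, and the inclusion--exclusion cross term $\Prob(\text{reach both endpoints})$ does not follow from the single-target path case. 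The paper sidesteps this entirely by working in the conditioned model: if both arcs from $u$ to $v$ contain a post it applies the wall-event lemma, and if exactly one arc contains a post $t$ it exhibits an explicit site-flip bijection on the arc from $t$ to $v$ carrying every configuration realising $u^{(0)}\to v^{(1)}$ to one realising $u^{(0)}\to v^{(0)}$; some such injection (or a completed transfer-matrix analysis) is what is missing from your write-up. The wheel case inherits the unproved cycle case and, worse, its two asymmetric hub sub-cases --- exactly where the content of the theorem lies --- are dispatched with the assertion that the hub ``contributes uniformly to both targets''. It does not: a hub open only in bunk $0$ manifestly favours reaching $v^{(0)}$ over $v^{(1)}$. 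The paper handles this by explicitly computing the four conditional probabilities in terms of two auxiliary quantities $r$ and $s$ and verifying the resulting inequality; your proposal needs a comparable argument here.
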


We now provide an outline of the rest of the paper.


\subsection{Paper overview}

Before diving into our results, we first describe in \Cref{sec:prelims} the terminology we will use, and also state and prove a simple lemma concerning a symmetry that we will often make use of.

In \Cref{sec:site-percolation} we then focus on the case of site percolation, and the main aim of this section is to construct a counterexample to the bunkbed conjecture for site percolation.
In pursuit of this goal, we will need to work with a more complex model of site percolation first, and then translate our counterexample from this setting to the general case.
We then, in \Cref{subsec:positive}, prove \Cref{thm:simple}, a positive result.

After dealing with site percolation, we then move on to directed graphs and hypergraphs.
In \Cref{sec:models}, we introduce the models of percolation that we will deal with, and then in \Cref{sec:constructions} we build the hypergraphs and digraphs which will serve as our counterexamples to various generalisations of the bunkbed conjecture.
Once this section is completed, we will have proved \Cref{thm:main}.

Finally, in \Cref{sec:conclusion}, we discuss potential extensions of our work, and state several open problems.


\section{Terminology and preliminary results}
\label{sec:prelims}

We now introduce some terminology which we shall use throughout the paper, and then a lemma which we will use on several occasions.
We now define several useful notions.
Firstly, we restrict our attention to models with a particular symmetry that is convenient to work with.

\begin{definition}[Symmetric percolation models]
    For a graph, hypergraph, or digraph $Q$, a set of edges $F\sseq E(\bbq)$ and any set of vertices $W\sseq V(Q)$, the \emph{flip of $F$ in $W$} is the set
    \begin{align*}
        F^{\text{f}(W)}\defined (F \setminus \set{u^{(i)}v^{(i)}\st u,v\in W, i\in\set{0,1}}) \union \set{u^{(1-i)}v^{(1-i)}\st u^{(i)}v^{(i)}\in F, u,v\in W}
    \end{align*}
    In other words, edges of $F$ in the top and bottom bunk are flipped on the subgraph induced by $W$.
    
    Then, a subset $A\sseq\cE(\bbq)$ is said to be \emph{symmetric} if for every $F\in A$ and every $W\sseq V(Q)$, we have $F^{\text{f}(W)}\in A$. 
    Moreover, we will say that $\cE(\bbq)$ is symmetric if $A=\cE(\bbq)$ is symmetric as a subset of itself.
\end{definition}

In particular, all bond percolation models considered here will be symmetric.
However, the site percolation models that we work with are in fact not symmetric by the above definition, and so we need another, site percolation-specific definition of symmetry.

\begin{definition}[Symmetric site percolation models]
    Given a graph $G$ and site percolation model $\cE(\bbg)$, take some $F\in\cE(\bbg)$ and let $U\sseq V(\bbg)$ be the corresponding set of open vertices.
    Then for any set $W\sseq V(G)$, the \emph{site-flip of $F$ in $W$} is the set $F^{\text{sf}(W)}$ of edges with both ends in the set
    \begin{align*}
        U^{\text{f}(W)}\defined\set{u^{(1-i)}\st u^{(i)}\in U, u\in W}\union \set{u^{(i)}\st u^{(i)}\in U, u\notin W}.
    \end{align*}
    In other words, open vertices in the top and bottom bunk are flipped on $W$.

    Then a subset $A\sseq \cE(\bbq)$ is said to be \emph{site-symmetric} if for every $F\in A$, we have $F^{\text{sf}(W)}\in \cE(\bbq)$.
    We will likewise say that $\cE(\bbq)$ is itself site-symmetric if it is closed under site-flips.
\end{definition}

In particular, all of the site percolation models we will work with are site-symmetric.
Next, we give some precise terminology for switching between paths in the bunkbed structure $\bbq$ and the original structure $Q$.

\begin{definition}[Shadows]
    Given a path $P$ in the bunkbed structure $\bbq$, the \emph{shadow} of $P$ is defined to be the path $\partial P$ in $Q$ obtained by replacing vertices $v^{(i)}\in V(\bbq)$ with $v\in V(Q)$, dropping any repeated vertices which arise from vertical edges.
    For instance, in the example shown in \Cref{fig:example}, the shadow of the path $w^{(0)},x^{(0)},x^{(1)},u^{(1)}$ is $w,x,u$.
\end{definition}

Finally, we define a type of event which will allow us to use the defining property of symmetric models.

\begin{definition}[Posts, quasi-posts, and wall events]
    A vertex $x\in V(G)$ will be called a \emph{post} in some $F\in\cE(G)$ if the vertical edge $x^{(0)}x^{(1)}$ is in $F$. 
    Extending this, we call a vertex $x$ a \emph{quasi-post} in some $F\in\cE(\bbq)$ if either $x$ is a post in $F$, or one can find a post $y$ and paths $P,P'$ in $Q$ from $x$ to $y$ and $y$ to $x$ respectively such that every edge $e$ in $P$ or $P'$ has both $e^{(0)}\in F$ and $e^{(1)}\in F$ (and so $x^{(0)}\biconn{F}x^{(1)}$ and $x$ behaves as if it were a post).
    Note that if $Q$ is not directed then we may take $P=P'$.
    
    An event $B\sseq\cE(\bbq)$ is then said to be a \emph{$(u,v)$-wall event} for vertices $u,v\in V(Q)$ if $B$ is either symmetric or site-symmetric, and, for every $F\in B$, every path in $F$ from $u$ to $v$ contains a quasi-post $w$ (depending on the path).
    
    For instance, in the example shown in \Cref{fig:example}, vertex $z$ is a post, and vertices $y$ and $v$ are both quasi-posts (but $w$ is not, despite the fact that $w^{(0)}\biconn{F}w^{(1)}$), and thus $\set{F}$ is a $(u,v)$-wall event.
\end{definition}

Note that a quasi-post in a site percolation model is simply a post, as $x\in V(G)$ being a quasi-post requires that both $x^{(0)}$ and $x^{(1)}$ are open.

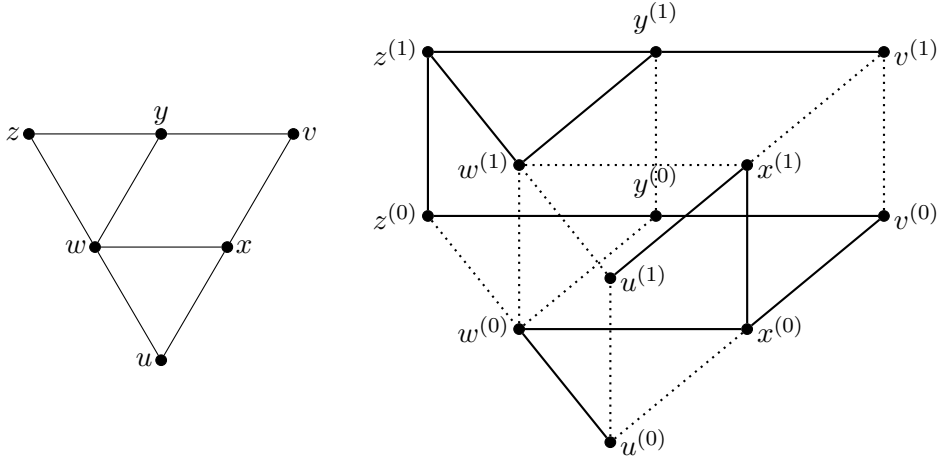
\begin{figure}[htbp]
    \centering
    \def\bunkbedheight{1.45}
\def\leftsinglewidth{0.58}
\begin{tikzpicture}[scale=1.5, every node/.style={draw, circle, fill=black, inner sep=0pt, minimum width=4pt}]

    \node[label=left:$u$] (u) at (2*\leftsinglewidth,0) {};
    \node[label=left:$w$] (w) at (\leftsinglewidth,1) {};
    \node[label=right:$x$] (x) at (3*\leftsinglewidth,1) {};
    \node[label=left:$z$] (z) at (0,2) {};
    \node[label=above:$y$] (y) at (2*\leftsinglewidth,2) {};
    \node[label=right:$v$] (v) at (4*\leftsinglewidth,2) {};

    \draw (x) -- (u) -- (w) -- (x) -- (v) -- (y) -- (w) -- (z) -- (y);

    \begin{scope}[shift={(3.5,-\bunkbedheight / 2.0)}]
        \node[label=right:$u^{(0)}$] (u0) at (1.6,0) {};
        \node[label=left:$w^{(0)}$] (w0) at (0.8,1) {};
        \node[label=right:$x^{(0)}$] (x0) at (2.8,1) {};
        \node[label=left:$z^{(0)}$] (z0) at (0,2) {};
        \node[label=above:$y^{(0)}$] (y0) at (2,2) {};
        \node[label=right:$v^{(0)}$] (v0) at (4,2) {};
    
        \begin{scope}[shift={(0,\bunkbedheight)}]
            \node[label=right:$u^{(1)}$] (u1) at (1.6,0) {};
            \node[label=left:$w^{(1)}$] (w1) at (0.8,1) {};
            \node[label=right:$x^{(1)}$] (x1) at (2.8,1) {};
            \node[label=left:$z^{(1)}$] (z1) at (0,2) {};
            \node[label=above:$y^{(1)}$] (y1) at (2,2) {};
            \node[label=right:$v^{(1)}$] (v1) at (4,2) {};
        \end{scope}
    
        \draw[thick] (u0) -- (w0) -- (x0) -- (v0) -- (y0) -- (z0) -- (z1) -- (y1) -- (v1);
        \draw[thick] (u1) -- (x1) -- (x0);
        \draw[thick] (z1) -- (w1) -- (y1);
        \draw[thick, dotted] (z0) -- (w0) -- (y0) -- (y1);
        \draw[thick, dotted] (v0) -- (v1);
        \draw[thick, dotted] (w0) -- (w1) -- (x1) -- (v1);
        \draw[thick, dotted] (x0) -- (u0) -- (u1) -- (w1);
    \end{scope}
\end{tikzpicture}
    \caption{An example bunkbed graph. $G$ is shown on the left, and $\bbg$ on the right (including both solid and dotted lines). 
    The edges of $\bbg$ shown with solid lines form the set $F$.}
    \label{fig:example}
\end{figure}

We now prove a simple lemma that we will make use of on several occasions.

\begin{lemma}
\label{lem:postless-path}
    Let $Q$ be a graph, hypergraph, or digraph, and $u,v\in V(Q)$ be two vertices, and let $A$ be a $(u,v)$-wall event.
    Then
    $$\Prob(u^{(0)}\conn{\cE(\bbq)}v^{(0)}|A) = \Prob(u^{(0)}\conn{\cE(\bbq)}v^{(1)}|A).$$
\end{lemma}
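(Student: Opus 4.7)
The plan is to prove the identity by constructing a measure-preserving involution $\Phi\from A \to A$ which bijects $\set{F \in A \st u^{(0)}\conn{F}v^{(0)}}$ with $\set{F \in A \st u^{(0)}\conn{F}v^{(1)}}$. Since $\cE(\bbq)$ is uniform after conditioning on $A$, the existence of such an involution immediately yields the claimed equality of conditional probabilities.

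For each $F \in A$ I associate a flip set $W_F\sseq V(Q)$ and put $\Phi(F) \defined F^{\text{f}(W_F)}$ in the bond-percolation case, or $\Phi(F) \defined F^{\text{sf}(W_F)}$ in the site-percolation case. My proposal is as follows: let $Q_F$ be the subgraph of $Q$ whose edges have at least one copy in $F$, let $S_F$ be the set of quasi-posts of $F$, let $C_F$ be the connected component of $v$ in $Q_F \setminus S_F$, and let $W_F$ consist of $C_F$ together with all quasi-posts of $F$ adjacent to $C_F$ in $Q_F$. The wall-event hypothesis forces every $u$--$v$ path in $Q_F$ to meet $S_F$, so $u \notin C_F$; meanwhile $v \in W_F$ by construction. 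The (site-)symmetry of $A$ then guarantees $\Phi(F)\in A$.

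To check that $\Phi$ is an involution it suffices to verify that the data defining $W_F$ is flip-invariant. The number of copies of each horizontal edge present in the configuration is preserved by every flip, so $Q_F$ does not change; vertical edges are never flipped, so posts remain posts; and the quasi-post property of a vertex depends only on posts together with those horizontal edges both of whose copies lie in the configuration. Hence $W_{\Phi(F)}=W_F$ and $\Phi\circ\Phi=\text{id}$.

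The remaining, and most delicate, step is to show that for each $F\in A$ with $u^{(0)}\conn{F}v^{(j)}$ one has $u^{(0)}\conn{\Phi(F)}v^{(1-j)}$. Taking a realising path $P$ and letting $w$ be the last quasi-post of $F$ appearing along its shadow $\partial P$, the portion of $P$ strictly after $w$ stays within a single layer (necessarily layer $j$, since quasi-posts are absent between $w$ and $v^{(j)}$ and switching layers requires a post) and has shadow inside $W_F$; the flip therefore carries this portion to layer $1-j$, producing a path from $w^{(1-j)}$ to $v^{(1-j)}$ in $\Phi(F)$. Quasi-post-ness being flip-invariant, $w^{(0)}\biconn{\Phi(F)}w^{(1)}$, so it suffices that $u^{(0)}$ reach some copy of $w$ in $\Phi(F)$. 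This follows by applying the same reasoning to the initial portion of $P$ and invoking the quasi-post property at each intervening crossing of the boundary of $W_F$. Making this path-surgery fully rigorous — in particular handling the possibility that the initial portion of $P$ enters and leaves $W_F$ several times — is the main technical obstacle, but it is enabled precisely by the wall-event assumption, which guarantees that every such boundary crossing takes place at a quasi-post.
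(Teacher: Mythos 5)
Your overall strategy is the same as the paper's: flip the configuration on the component of $v$ lying beyond the quasi-posts (together with some quasi-posts), and check that this is an involution exchanging the two connection events. The flip-invariance of $Q_F$ and of the quasi-post set, the boundary analysis of $W_F$, and the surgery on the final segment of the realising path are all sound. But one step is wrong as written, and it is load-bearing: the assertion that ``the wall-event hypothesis forces every $u$--$v$ path in $Q_F$ to meet $S_F$, so $u\notin C_F$.'' The wall-event hypothesis concerns paths in the configuration $F$ inside $\bbq$, whereas $C_F$ is defined via paths in the shadow graph $Q_F$, and a quasi-post-free path in $Q_F$ need not lift to any path in $F$: take consecutive edges $ua$ and $av$ with $u^{(0)}a^{(0)}\in F$ and $a^{(1)}v^{(1)}\in F$ but $a$ not a quasi-post. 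Then $u,a,v$ is a path in $Q_F\setminus S_F$ even though no copy of $u$ reaches any copy of $v$ through $a$ in $F$. So for a single $F$ the hypothesis does not exclude $u\in C_F$; and if $u\in C_F$ then $u$ is not a quasi-post, the flip on $W_F$ moves every $F$-edge at $u^{(0)}$ with shadow inside $W_F$ up to $u^{(1)}$, and your surgery can fail to produce any path starting at $u^{(0)}$ --- the claimed bijection breaks exactly there.

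The step is true and can be repaired, but the repair genuinely uses the (site-)symmetry of $A$, which you invoke only to get $\Phi(F)\in A$. For graphs and digraphs: if some $F\in A$ admitted a quasi-post-free $u$--$v$ path in $Q_F$, flipping on the endpoint pair of a single edge toggles the layer of precisely that edge, so a sequence of such flips places every edge of that path in layer $0$; the resulting configuration is still in $A$ by symmetry, has the same quasi-posts, and contains a genuine quasi-post-free path from $u^{(0)}$ to $v^{(0)}$, contradicting the wall property of that element. (For site percolation the claim is immediate, since a non-post is open in only one bunk, so a post-free path in $Q_F$ already lies in a single layer; for hypergraphs the realignment needs slightly more care because flipping on the vertex set of a hyperedge may also toggle hyperedges nested inside it.) The paper sidesteps all of this by defining its flip set $W$ as the set of vertices admitting a quasi-post-free-shadow path to $v$ \emph{in $F$ itself}, for which $u\notin W$ is immediate from the wall hypothesis; with that modification, or with the realignment argument above, your proof goes through. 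Two smaller points: you should handle the case where $v$ is itself a quasi-post (then $C_F$ is empty, but $u^{(0)}\conn{F}v^{(0)}$ and $u^{(0)}\conn{F}v^{(1)}$ are equivalent for such $F$, so the identity map suffices), and the fact that every boundary crossing of $W_F$ occurs at a quasi-post is a consequence of your construction of $W_F$ as $C_F$ plus its adjacent quasi-posts, not of the wall-event assumption.
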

\begin{proof}
    We follow an argument similar to the `mirror argument' used by Linusson \cite[Lemma 2.3]{linusson2011}, but in a slightly more general setting.
    Define the sets $A^{(0)}$ and $A^{(1)}$ by 
    $$A^{(i)}\defined\set{F\in A\st u^{(0)}\conn{F}v^{(i)}}.$$
    We need to show that $\abs{A^{(0)}} = \abs{A^{(1)}}$.
    To this end, we construct a bijection between these families of sets.

    Take some $F\in A^{(i)}$, and define $W$ to be the set of those vertices $x\in V(Q)$ such that there is some $j\in\set{0,1}$ and a path $P$ from $x^{(j)}$ to $v^{(i)}$ with edges in $F$ such that $\partial P$ is a path from $x$ to $v$ with no vertex of $\partial P$ a quasi-post, except maybe $v$.
    In the case when $Q$ is a graph, $W$ is thus the shadow of the connected component of $v$ in $\bbq\setminus\set{w^{(i)}\st w\neq v, w\text{ is a quasi-post},i\in\set{0,1}}$.
    In particular, it is immediate for any $Q$ that every vertex $y\in W$ has a path to $v$ entirely contained in $W$. 
    Furthermore, as $A$ is a $(u,v)$-wall event, we have that $u\notin W$.
    
    By the above observations, there must be some path $P$ in $F$ from $u^{(0)}$ to $v^{(i)}$ which passes from outside of $W$ to inside $W$ exactly once, with the final vertex $x$ of $\partial P\inter (V(Q)\setminus W)$ being a quasi-post.
    Define the set $X$ to be $W\union\set{x\in V(Q)\st x\text{ is a quasi-post}}$.
    Let $F'$ and $P'$ be the flips of $F$ and $P$ in $X$ respectively, adding or removing the edge $x^{(0)} x^{(1)}$ to or from $P'$ as necessary so that $P'$ is a path.
    Note that in the case that $B$ is site-symmetric, every vertex in $X$ adjacent to a vertex not in $X$ must be a post, so flips and site-flips in $X$ coincide.
    Thus $P'$ will indeed be a path, and $F'$ is in $B$, as $(u,v)$-wall events are by definition (site-)symmetric.
    
    Then $F'\in A^{(1-i)}$ as $P'$ is a path from $u^{(0)}$ to $v^{(1-i)}$.
    Note that if we apply the same argument as above to $F'$, we produce the same set $W$ as we did for $F$, as the set of quasi-posts is unchanged by applying a flip to $F$.
    Thus the map from $F$ to $F'$ is a bijection, and $\abs{A^{(0)}} = \abs{A^{(1)}}$, as required.
\end{proof}

Throughout the paper, we will refer to models as either `conditioned' or `unconditioned'.
While this is not precise terminology, it should be taken to mean the following.
In the unconditioned cases, our model will be equivalent to one in which every edge (or vertex, in the case of site percolation) is retained independently with probability 1/2.
The conditioned cases then have further restrictions that move us away from this situation.
We now put our terminology and result to use in resolving the case of site percolation.


\section{Site percolation}
\label{sec:site-percolation}

The goal of this section is to show that the bunkbed conjecture is false for site percolation.
We consider the case of site percolation before moving on to the other structures (hypergraphs and digraphs) for two purposes: for ease of notation, as we are in this section concerned only with ordinary graphs, and to introduce our counterexamples in a more structured manner, as the results of \Cref{sec:models} are built on top of the results of this section.
In the framework introduced in \Cref{sec:intro,sec:prelims}, the site percolation we consider comes in two flavours.

\begin{model}[$E_1$; unconditioned site percolation]
\label{model:site-advanced}
    For an undirected graph $G$, let the set of open vertices $W\sseq V(\bbg)$ be chosen uniformly at random among all subsets of vertices.
    We then retain precisely those edges with both endpoints in $W$.
\end{model}

In order to construct a counterexample to the bunkbed conjecture for site percolation, we first consider a conditioned form of site percolation, in which we fix which vertices are to be posts.
We also condition further, insisting that among those $v\in V(G)$ which are not posts, precisely one of the vertices $v^{(0)}$ and $v^{(1)}$ is open.

\begin{model}[$E_2^T$; conditioned site percolation]
\label{model:site-simple}
    For an undirected graph $G$ and subset $T\sseq V(G)$, a set of open vertices is selected uniformly at random from all sets of the following form.
    If $v\in T$, both $v^{(0)}$ and $v^{(1)}$ are open, and for $v\notin T$ precisely one of $v^{(0)}$ and $v^{(1)}$ is open, and the other is closed.
\end{model}

We now construct our counterexamples for site percolation, beginning with the case of conditioned site percolation defined as \Cref{model:site-simple}.


\subsection{Conditioned site percolation}
\label{subsec:conditioned-site}

We start by giving the construction of the counterexample to the bunkbed conjecture for model $E_2^{T_2}$ as defined in \Cref{model:site-simple}.
Indeed, this graph $G_2$ is shown in \Cref{fig:counterexample}.

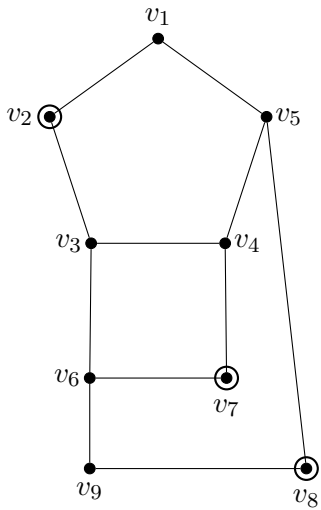
\begin{figure}[htbp]
    \centering
    \begin{tikzpicture}[scale=1.5, every node/.style={draw, circle, fill=black, inner sep=0pt, minimum width=4pt}]

    \node[label=above:$v_1$] (A) at (90:1) {};
    \node[label={[xshift=-1mm]left:$v_2$}] (B) at (162:1) {};
    \node[label=left:$v_3$] (C) at (234:1) {};
    \node[label=right:$v_4$] (D) at (306:1) {};
    \node[label=right:$v_5$] (E) at (18:1) {};
    
    \node[label=left:$v_6$] (F) at (-0.6, -2) {};
    \node[label={[yshift=-1mm]below:$v_7$}] (G) at (0.6, -2) {};

    \node[label={[yshift=-1mm]below:$v_8$}] (H) at (1.3, -2.8) {};
    \node[label=below:$v_9$] (I) at (-0.6, -2.8) {};
    
    \draw (A) -- (B) -- (C) -- (D) -- (E) -- (A);
    
    \draw (C) -- (F) -- (G) -- (D);

    \draw (E) -- (H) -- (I) -- (F);

    \draw[thick] (B) circle (0.1);
    \draw[thick] (G) circle (0.1);
    \draw[thick] (H) circle (0.1);
\end{tikzpicture}
    \caption{The basic counterexample, $G_2$. Vertices $v_2,v_7,v_8$ form the set $T_2$ conditioned to have vertical edges, and are shown highlighted with extra circles.}
    \label{fig:counterexample}
\end{figure}

Before proceeding to the proof that the graph $G_2$ in \Cref{fig:counterexample} is indeed a counterexample to the bunkbed site percolation conjecture, we first make a brief remark about its discovery.

\begin{remark}
\label{remark:gadget}
    The vertices $v_8$ and $v_9$ in $G_2$ serve as a gadget, forcing any vertex adjacent to $v_8$ (in this case, only $v_5$) to be unreachable.
    More precisely, note that $v_9$ has a unique neighbour, $v_6$, and consider removing the vertices $v_8$ and $v_9$ to form a graph $G'$, and making $v_6$ the target vertex.
    We claim that the site percolation bunkbed conjecture holding in $G$ is equivalent to it holding in $G'$ with the added constraint that no $G$-neighbour of $v_8$ is reachable from $v_1^{(0)}$.
    Indeed, if there is a path from $v_1^{(0)}$ to a neighbour of $v_8^{(0)}$ or $v_8^{(1)}$, then as $v_8$ is a post, then there is a path from $v_1^{(0)}$ to $v_8^{(0)}$ and to $v_8^{(1)}$, and thus to $v_9$, in whichever bunk it occupies.
    By symmetry of whether $v_9$ is in the upper or lower bunk, these situations may be disregarded, and so we are left to consider the case in which no neighbour of $v_8$ can be reached from $v_1^{(0)}$.
\end{remark}

The following result demonstrates that $\cone$ is false.

\begin{proposition}
\label{prop:counterexample-1}
    If $G_2$ is the graph shown in \Cref{fig:counterexample} with $T_2\defined\set{v_2,v_7,v_8}$, then we have
    $$\Prob(v_1^{(0)}\connone v_9^{(0)})=12/64<\Prob(v_1^{(0)}\connone v_9^{(1)})=13/64$$
\end{proposition}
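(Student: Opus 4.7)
My plan is to combine the gadget reduction of \Cref{remark:gadget} with a focused case analysis. The sample space of $E_2^{T_2}$ consists of $2^6 = 64$ equally likely configurations, each indexed by a bunk $b_i \in \{0,1\}$ for the six non-post vertices $v_i \in \{v_1, v_3, v_4, v_5, v_6, v_9\}$ (the posts $v_2, v_7, v_8$ being open in both bunks). Write $X^{(i)} \defined |\{F : v_1^{(0)} \connone v_9^{(i)}\}|$, so the proposition becomes $X^{(0)} = 12$ and $X^{(1)} = 13$.

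The 32 configurations with $b_1 = 1$ contribute nothing, as $v_1^{(0)}$ is closed. For the 32 configurations with $b_1 = 0$, I would apply the reduction of \Cref{remark:gadget}: let $R_5$ denote the event that $v_1^{(0)}$ reaches $v_5^{(b_5)}$ in the subgraph $G' \defined G_2 \setminus \{v_8, v_9\}$. On $R_5$ the extension $v_5^{(b_5)} \to v_8^{(b_5)} \to v_8^{(b_9)} \to v_9^{(b_9)}$, using the post $v_8$ as a bridge, connects $v_1^{(0)}$ to $v_9^{(b_9)}$ for every value of $b_9$. Since $R_5$ does not depend on $b_9$, such configurations contribute equally to $X^{(0)}$ and $X^{(1)}$ when summed over $b_9 \in \{0,1\}$. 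A short inspection of $G'$ shows $R_5$ holds in 11 of the 16 possible $(b_3, b_4, b_5, b_6)$-patterns: the 8 patterns with $b_5 = 0$ (trivially, via the edge $v_1^{(0)} v_5^{(0)}$) together with the three patterns $(0,1,1,0)$, $(1,1,1,0)$, $(1,1,1,1)$ (where $v_5^{(1)}$ is reached via $v_4^{(1)}$). Each such pattern contributes 1 to $X^{(0)}$ and 1 to $X^{(1)}$, for a running total of 11 each.

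For the remaining five patterns $(b_3, b_4, b_5, b_6) \in \{(0,0,1,0), (0,0,1,1), (0,1,1,1), (1,0,1,0), (1,0,1,1)\}$, $R_5$ fails, so $v_9^{(b_9)}$ is reachable if and only if $v_6^{(b_9)}$ is reachable in $G'$ (which in particular forces $b_6 = b_9$). A brief BFS in each case yields: pattern $(0,0,1,0)$ reaches only $v_6^{(0)}$ via the direct edge $v_3^{(0)} v_6^{(0)}$; pattern $(0,0,1,1)$ reaches only $v_6^{(1)}$ via the post-$v_7$ detour $v_3^{(0)} v_4^{(0)} v_7^{(0)} v_7^{(1)} v_6^{(1)}$; patterns $(0,1,1,1)$ and $(1,0,1,0)$ reach neither copy of $v_6$; and pattern $(1,0,1,1)$ reaches only $v_6^{(1)}$ via the direct edge $v_3^{(1)} v_6^{(1)}$. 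These patterns contribute 1 to $X^{(0)}$ and 2 to $X^{(1)}$, yielding the final totals $X^{(0)} = 11 + 1 = 12$ and $X^{(1)} = 11 + 2 = 13$ and hence the claimed probabilities.

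The main obstacle is the BFS analysis in the five non-gadget patterns, where the asymmetry between $X^{(0)}$ and $X^{(1)}$ actually appears. The key structural feature behind the counterexample is that the post $v_7$ offers a bunk-crossing detour accessing $v_6^{(1)}$ from a bunk-0 starting cluster (as in pattern $(0,0,1,1)$), but any analogous detour to $v_6^{(0)}$ would have to route through $v_4^{(1)}$ and thereby reach $v_5^{(1)}$, triggering $R_5$ and throwing the configuration back into the symmetric gadget contribution. This one-sided failure of mirror-symmetry is what produces the excess $+1$ of $v_9^{(1)}$-reaching over $v_9^{(0)}$-reaching configurations, and thus the desired strict inequality.
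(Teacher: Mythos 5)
Your proof is correct and takes essentially the same approach as the paper: both reduce via the $v_8$--$v_9$ gadget of \Cref{remark:gadget} and then exhaustively enumerate the remaining configurations, and your counts (11 symmetric patterns plus a $1$ vs.\ $2$ split over the five residual patterns) agree exactly with the paper's table, which records reachability of $v_9$ for the eight $(v_3,v_4,v_6)$ patterns with $v_1$ lower and $v_5$ upper. The only cosmetic difference is that you fold the three patterns reaching $v_5^{(1)}$ into the symmetric bucket via the event $R_5$ rather than listing them in the table.
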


\begin{proof}
    We prove \Cref{prop:counterexample-1} by exhaustive enumeration of cases. 
    A diagram of $G_2$ is repeated below for convenience.
    Firstly, note that we may assume by symmetry that vertex $v_1$ is present in the lower bunk.
    Furthermore, as $v_9$ is present in only one of the two bunks, we may condition on the rest of the graph, and then see in which of the upper or lower bunk $v_9$ would be reachable.
    Finally, by \Cref{remark:gadget}, if $v_5$ is in the same position (upper or lower) as $v_1$, then there is necessarily a path in $\bbg_1$ with shadow $v_1\to v_5\to v_8\to v_9$, whether $v_9$ is in the upper or lower bunk.
    Thus it suffices to consider the cases in which $v_5$ is in the upper bunk.
    
    There are three vertices left to consider: $v_3$, $v_4$, and $v_6$.
    We enumerate the eight possible configurations of these vertices, and in which states $v_9$ can be reached, in \Cref{table:enumerate}.

    As can be seen, the number of configurations in which $v_9$ is reached in the upper bunk exceeds the number in which $v_9$ is reached in the lower bunk, thus proving \Cref{prop:counterexample-1}.
    
    \begin{figure}[htbp]
        \begin{minipage}{0.65\textwidth}
            \centering
            \begin{tabular}{ccc|c}
            \toprule
            $v_3$ & $v_4$ & $v_6$ & $v_9$ \\
            \midrule
            - & - & - & $\set{-}$ \\
            - & - & + & $\set{+}$ \\
            - & + & - & $\set{-,+}$ \\
            - & + & + & $\emptyset$ \\
            + & - & - & $\emptyset$ \\
            + & - & + & $\set{+}$ \\
            + & + & - & $\set{-,+}$ \\
            + & + & + & $\set{-,+}$ \\
            \bottomrule
            \end{tabular}
            \caption{The positions in which $v_9$ can be reached in $G_2$. We assume $v_1$ is present in the lower bunk and $v_5$ in the upper.
            `-' denotes that the relevant vertex is present in the lower bunk, and `+' the upper.
            The fourth column shows in which bunks $v_9$ could be present and reachable from $v_1$.}
            \label{table:enumerate}
        \end{minipage}\hfill
        \begin{minipage}{0.35\textwidth}
            \centering
            \begin{tikzpicture}[scale=1.5, every node/.style={draw, circle, fill=black, inner sep=0pt, minimum width=4pt}]

    \node[label=above:$v_1$] (A) at (90:1) {};
    \node[label={[xshift=-1mm]left:$v_2$}] (B) at (162:1) {};
    \node[label=left:$v_3$] (C) at (234:1) {};
    \node[label=right:$v_4$] (D) at (306:1) {};
    \node[label=right:$v_5$] (E) at (18:1) {};
    
    \node[label=left:$v_6$] (F) at (-0.6, -2) {};
    \node[label={[yshift=-1mm]below:$v_7$}] (G) at (0.6, -2) {};

    \node[label={[yshift=-1mm]below:$v_8$}] (H) at (1.3, -2.8) {};
    \node[label=below:$v_9$] (I) at (-0.6, -2.8) {};
    
    \draw (A) -- (B) -- (C) -- (D) -- (E) -- (A);
    
    \draw (C) -- (F) -- (G) -- (D);

    \draw (E) -- (H) -- (I) -- (F);

    \draw[thick] (B) circle (0.1);
    \draw[thick] (G) circle (0.1);
    \draw[thick] (H) circle (0.1);
\end{tikzpicture}
            \caption{The graph $G_2$.}
        \end{minipage}
    \end{figure}    
\end{proof}

We will use this graph to produce counterexamples to the bunkbed conjecture for the other models.
First, we remove the conditioning.


\subsection{Unconditioned site percolation}
\label{subsec:unconditioned-site}

We now use graph $G_2$ to build a counterexample for \Cref{model:site-advanced}.
We construct a graph $G_1$ by taking $G_2$ and blowing up each of the three vertices in $T_2$ to an empty graph on $k$ vertices, for some $k$ to be decided later.
We now prove that the statement $\ctwo$ is indeed false.

To prove this result, we shall go via an intermediate model, which we will denote by $E_3^T$.

\begin{model}[$E_3^T$; semi-conditioned site percolation]
    For an undirected graph $G$ and subset $T\sseq V(G)$, define $T_\pm\defined \set{x^{(0)},x^{(1)}\st x\in T}$.
    Let $W\sseq V(\bbg)\setminus T_\pm$ be chosen uniformly at random amongst all such subsets.
    Then the vertices $W\union T_\pm$ are set to be open, and all other vertices are closed.
\end{model}

Note that in model $E_3^T$, vertices $x\notin T$ may or may not be posts, depending on whether or not both $x^{(0)}$ and $x^{(1)}$ are open.

Let $U_{v_2},U_{v_7}$, and $U_{v_8}$ be the sets, each of $k$ vertices, in $G_1$ produced by the blow-ups of vertices $v_2,v_7,$ and $v_8$ respectively.
Let $A$ be the event that for each $v\in T_2$, there is some $x\in U_v$ such that $x^{(0)}$ and $x^{(1)}$ are both open, so edge $x^{(0)}x^{(1)}$ is retained.

Note first that, conditional on $A$, for each $v\in T_2$, the set $U_v$ behaves exactly the same way as the post $v$ in model $E_2^T(G_2)$.
Thus for any $x,y\in V(G_2)\setminus T_2\sseq V(G_1)$, 
$$\Prob(x^{(i)}\conntwo y^{(j)}|A) = \Prob(x^{(i)}\biconn{E_3^{T_2}(G_2)} y^{(j)}).$$

Next, let $B$ be the event that some vertex $x$ in $V(G_2)\setminus T_2\sseq V(G_1)$ has either both $x^{(0)}$ and $x^{(1)}$ open (so vertical edge $x^{(0)}x^{(1)}$ is retained), or neither are open.
Then, noting that there is a unique path in $G_2$ from $v_1$ to $v_9$ avoiding $T_2$, we see that $A\inter B$ is a $(v_1,v_9)$-wall event for model $E_3^{T_2}$: if $x^{(0)}$ and $x^{(1)}$ are both open, then $x$ is a post.
If neither are open, then any path from $v_1$ to $v_9$ must avoid $x$, and thus hit $T_2$.
Therefore, by \Cref{lem:postless-path},
$$\Prob(v_1^{(0)}\conntwo v_9^{(1)}|A,B) = \Prob(v_1^{(0)}\conntwo v_9^{(0)}|A,B).$$
Furthermore, conditioning model $E_3^{T_2}$ on $B^C$ tells us that all vertices outside of $T_2$ appear in either the upper or lower bunks, but not both.
Thus
$$\Prob(v_1^{(i)}\conntwo v_9^{(j)}|A,B^C) = \Prob(v_1^{(i)}\biconn{E_3^{T_2}(G_2)} v_9^{(j)}|B^C) = \Prob(v_1^{(i)}\connone v_9^{(j)})$$
for all $i,j\in\set{0,1}$.
We therefore find that
\begin{align*}
    \Prob(v_1^{(0)}&\conntwo v_9^{(1)}) - \Prob(v_1^{(0)}\conntwo v_9^{(0)}) \\
    &= \Prob(A)\Prob(B^C)\bigl[ \Prob(v_1^{(0)}\conntwo v_9^{(1)}|A,B^C) - \Prob(v_1^{(0)}\conntwo v_9^{(0)}|A,B^C) \bigr] \\
    &\qquad + \Prob(A^C)\bigl[ \Prob(v_1^{(0)}\conntwo v_9^{(1)}|A^C) - \Prob(v_1^{(0)}\conntwo v_9^{(0)}|A^C) \bigr] \\
    &\geq 2^{-6}(1 - \Prob(A^C))\Prob(B^C) - \Prob(A^C) \\
    &= 2^{-6}\Prob(B^C) - (1 + 2^{-6}\Prob(B^C))\Prob(A^C).
\end{align*}

It thus suffices to prove that
\begin{align}
\label{eq:target}
    \Prob(A^C) < \frac{2^{-6}\Prob(B^C)}{1 + 2^{-6}\Prob(B^C)}.
\end{align}

It is easily computed that $\Prob(B^C)=2^{-6}$ and $\Prob(A)=(1-(3/4)^k)^3$, whence $\Prob(A^C)\leq 3(3/4)^k$.
In particular, inequality \eqref{eq:target} holds when $k\geq 33$.
Thus we may take $k=33$ to produce a graph $G_1$ on $6+3k=105$ vertices for which $\ctwo$ is false.


\subsection{Positive results}
\label{subsec:positive}

We now prove \Cref{thm:simple}, which states that $\conj{E_1,G}$, i.e. the bunkbed conjecture for site percolation, is true when $G$ is either a path, a cycle, or a wheel.

\begin{proof}[Proof of \Cref{thm:simple}]
    We will in fact prove the result $\conj{E_2^T,G}$ holds for any subset $T$ of $G$, and will deal with the cases of paths, cycles, and wheels in turn.
    Note that conditioning on any vertex of $G$ being present in neither the upper nor the lower bunk either trivialises the result or reduces us to an earlier case or smaller $G$.
    Thus proving the result for $E_2^T$ for arbitrary $T$ suffices to conclude the desired result for $E_1$.
    Let $B^-$ and $B^+$ be the events that $u^{(0)}\conn{} v^{(0)}$ and $u^{(0)}\conn{} v^{(1)}$ respectively.
    Note that by \Cref{lem:postless-path}, if every path from $u$ to $v$ includes an element of $T$, then we have that $\Prob(B^-)-\Prob(B^+) = 0$, which suffices to conclude our result.
    We will now deal with the three cases in turn.
    
    Firstly, if $G$ is a path, then we may restrict our attention to the section of path between $u$ and $v$. 
    If $T$ is non-empty, then every path from $u$ to $v$ passes through a post, and we are done.
    If $T$ is empty, then $\Prob(B^+)=0$, and we are again done.

    Next, assume that $G$ is a cycle; there are two paths from $u$ to $v$.
    We may assume that $T$ is non-empty, and if there is a post on each path from $u$ to $v$, then \Cref{lem:postless-path} tells us that $\Prob(B^-)-\Prob(B^+) = 0$, and we are done.
    Thus assume that there are posts on precisely one of the two paths; let $t\in T$ be one such post, and let $P$ be the vertices of the path from $t$ to $v$ not via $u$.
    Consider the function $f$ on $E_2^T(G)$ which replaces the set of open vertices with its site-flip on $P$.
    Then $f$ is a bijection, and $u^{(0)}\conn{F} v^{(1)}$ implies that $u^{(0)}\conn{f(F)} v^{(0)}$; this is sufficient to conclude.

    Finally, assume that $G$ is a wheel, and let $c$ be the centre vertex.
    If $c\in T$ then the result is trivial, so we may assume that $c$ is open in precisely one bunk.
    Let $A^-$ and $A^+$ be the events that $c$ is open in the lower and upper bunk respectively.
    Let $r$ be the probability that there is a path, not via $c$, from some vertex of $T$ to $v$ in which every vertex is open in the lower bunk (or equivalently every vertex is open in the upper bunk).
    Furthermore, let $s$ be the probability that $u^{(0)}$ connects to $c^{(1)}$ when $c$ is only present in the upper bunk; equivalently, $s$ is the probability that $u^{(0)}$ connects to some element of $T$.
    Noting that the above two events are independent, we discover the following.
    \begin{align*}
        &\Prob(B^-|A^-) = 1/2, & \Prob(B^+|A^-)&=r, \\
        &\Prob(B^-|A^+) > sr, & \Prob(B^+|A^+)&=s/2.
    \end{align*}
    The inequality for $\Prob(B^-|A^+)$ comes from the possibility that there could be a path from $u^{(0)}$ to $v^{(0)}$ not via $c$.
    We then have
    \begin{align*}
        \Prob(B^-|A^-) - \Prob(B^+|A^-) + \Prob(B^-|A^+) - \Prob(B^+|A^+) > (1-s)(1/2 - r)\geq 0,
    \end{align*}
    from which the result follows.
\end{proof}

While the above proof was not complicated, it seems to be far from simple to find a more general family of graphs for which the bunkbed conjecture for site percolation can be proved.
Some natural candidates are graphs with small diameter or radius.
Further possible extensions are discussed in \Cref{sec:conclusion}.


\section{Models}
\label{sec:models}

We consider a total of four further models in addition to those discussed in \Cref{sec:site-percolation}, two for each of the cases of hypergraphs and directed graphs.
Following the structure of the proofs in \Cref{sec:site-percolation}, in each case we will consider first a conditioned model, and then an unconditioned version.
In particular, in all cases, the conditioned model on a structure $Q$ (hypergraph or digraph) will have precisely one of the (hyper)edges $e^{(0)},e^{(1)}$ retained for each edge $e\in E(Q)$.

We will first consider hypergraphs, and now define the models we will consider.

\begin{model}[$E_4^T$; conditioned hypergraphs]
\label{model:hypergraph-simple}
    For a hypergraph $\cH$ and subset $T\sseq V(\cH)$, let $E_4^T(\bbh)$ be the set of those $F\sseq E(\bbh)$ containing precisely one of $e^{(0)}$ and $e^{(1)}$ for each $e\in E(\cH)$, and precisely those vertical edges $v^{(0)}v^{(1)}$ for which $v\in T$.
\end{model}

It was conjectured by Linusson \cite{linusson2011} that a version of the bunkbed conjecture should also hold on hypergraphs.
In our notation, the conjecture is as follows.

\begin{conjecture}[\cite{linusson2011} Conjecture 2.7]
    For any hypergraph $\cH$ and set $T\sseq V(\cH)$ of posts, $\conj{E_4^T,\cH}$ holds. 
    That is, for any vertices $u,v\in V(\cH)$, we have
    $$\Prob(u^{(0)}\biconn{E_4^T(\cH)}v^{(0)})\geq \Prob(u^{(0)}\biconn{E_4^T(\cH)}v^{(1)}).$$
\end{conjecture}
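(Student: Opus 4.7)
The natural approach is to adapt the mirror argument underlying \Cref{lem:postless-path} so that it applies not only on wall events but to the full sample space of $E_4^T(\bbh)$. Partition according to the event $A$ that every retained $u$-$v$ path passes through a quasi-post. On $A$ the event under consideration is a $(u,v)$-wall event, so \Cref{lem:postless-path} applies directly and yields
\begin{align*}
    \Prob(u^{(0)} \biconn{E_4^T(\bbh)} v^{(0)} \mid A) = \Prob(u^{(0)} \biconn{E_4^T(\bbh)} v^{(1)} \mid A),
\end{align*}
reducing the problem to establishing $\Prob(u^{(0)} \biconn{} v^{(0)} \mid A^c) \geq \Prob(u^{(0)} \biconn{} v^{(1)} \mid A^c)$.

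On $A^c$ the plan is to construct an injection from those $F \in E_4^T(\bbh)$ with $u^{(0)} \biconn{F} v^{(1)}$ into those with $u^{(0)} \biconn{F} v^{(0)}$ by an appropriate hyperedge flip. Given such an $F$, define $W \sseq V(\cH)$ to be the set of vertices $x$ admitting a path from $x^{(j)}$ to $v^{(1)}$ using retained hyperedges whose shadow contains no quasi-post other than possibly $v$; the analysis in the proof of \Cref{lem:postless-path} shows $u \notin W$. Produce the flipped configuration $F'$ by swapping $e^{(j)}$ with $e^{(1-j)}$ for every hyperedge $e$ entirely supported on $W$ together with the set of quasi-posts, and leaving all other hyperedges of $F$ unchanged. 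The claim to verify would then be that $F \mapsto F'$ is a well-defined involution carrying $u^{(0)} \biconn{} v^{(1)}$ configurations to $u^{(0)} \biconn{} v^{(0)}$ configurations.

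The main obstacle, and the step I expect to be the critical one, is controlling hyperedges that \emph{straddle} the boundary of $W$ together with the set of quasi-posts, meaning hyperedges containing both a vertex of $W$ and a vertex outside $W$ that is not a quasi-post. In the graph case every such boundary edge runs through a quasi-post, so the flip leaves it in a consistent state; in the hypergraph case a straddling hyperedge has no single coherent bunk assignment after the flip, and neither swapping nor leaving it alone preserves the intended bijection. Progress past this point would require either redefining $W$ so as to be closed under hyperedge-inclusion, or arguing that no straddling hyperedge can arise among the retained edges of a $u^{(0)} \to v^{(1)}$ witness path avoiding quasi-posts. Whether either refinement can be pushed through consistently is, in my view, the decisive question; if neither can, one might instead attempt an induction on $\card{E(\cH)}$, removing one hyperedge at a time and tracking the differential effect on the two probabilities.
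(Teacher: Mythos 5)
The statement you are trying to prove is false, and the paper's treatment of it is a \emph{disproof}: this is Linusson's Conjecture 2.7, quoted in the paper precisely so that \Cref{thm:main} can refute it. The paper constructs an explicit counterexample $\cH_4$ (see \Cref{fig:counterexample-hypergraph}), obtained from the hypergraph dual of the site-percolation counterexample $G_2$ of \Cref{fig:counterexample} by collapsing the dual edges of the posts to single vertices, taking $T_4=\set{u_2,u_7,u_9}$, and adding pendant vertices $u_1$ and $u_{10}$. The claim in \Cref{subsec:hyper} shows $\Prob(u_1^{(i)}\biconn{\mthree}u_{10}^{(j)})=\Prob(v_1^{(i)}\biconn{\mone}v_9^{(j)})$, so by \Cref{prop:counterexample-1} one gets $\Prob(u_1^{(0)}\biconn{\mthree}u_{10}^{(0)})=12/64<13/64=\Prob(u_1^{(0)}\biconn{\mthree}u_{10}^{(1)})$. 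No proof strategy for the general statement can succeed.

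That said, your analysis of where the mirror argument breaks is essentially correct, and it is worth recognising that the obstacle you flag is not a technicality to be engineered around but the structural reason the conjecture fails. A hyperedge containing both a vertex of $W$ and a non-quasi-post outside $W$ admits no coherent bunk assignment after the flip, and neither of your proposed repairs is available: redefining $W$ to be closed under hyperedge-inclusion destroys the property $u\notin W$ (it can absorb $u$ through a straddling hyperedge), and straddling hyperedges genuinely do arise on retained witness paths -- in $\cH_4$ the hyperedges play the role of the \emph{vertices} of $G_2$, so the configurations driving the $12/64$ versus $13/64$ gap in \Cref{table:enumerate} are exactly configurations where the flip cannot be completed consistently. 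The induction on $\card{E(\cH)}$ you suggest as a fallback likewise cannot close, since the base inequality it would need to propagate is false. The productive next step from where you stopped would have been to suspect falsity and search for a small counterexample rather than a further refinement of the bijection.
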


Note that \Cref{thm:main} disproves this conjecture.
Similarly to in the site percolation case, we will use our results concerning \Cref{model:hypergraph-simple} to prove a result about an unconditioned model.

\begin{model}[$E_5$; unconditioned hypergraphs]
\label{model:hypergraph-advanced}
    For a hypergraph $\cH$, let $F\sseq E(\bbh)$ be chosen uniformly at random amongst all such subsets.
    Keep precisely these edges, and discard the others.
\end{model}

Finally, we consider digraphs, and, following the same pattern as before, have two models.

\begin{model}[$E_6^{T,F}$; conditioned digraphs]
\label{model:digraph-simple}
    For a digraph $D$, subset $T\sseq V(D)$ of vertices, and subset $F\sseq E(D)$ of (directed) edges, let $E_6^{T,F}(\bbd)$ be the family of those subsets of $E(\bbd)$ containing both $e^{(0)}$ and $e^{(1)}$ for all $e\in F$, exactly one of $e^{(0)}$ and $e^{(1)}$ for all $e\in E(D)\setminus F$, and precisely those vertical edges $v^{(0)}v^{(1)}$ for which $v\in T$.
\end{model}

It was conjectured by Leander \cite{leander2009thesis} that the bunkbed conjecture holds for acyclic directed graphs (in the same sense as discussed here), and it was furthermore stated that this should broaden to hold for any directed graph.
The latter is disproved by \Cref{thm:main}, but the example we construct has many cycles, and so it remains open whether the conjecture holds for acyclic directed graphs.

We can now remove the conditioning from this model, but at the cost of working with multigraphs.

\begin{model}[$E_7$; unconditioned digraphs]
\label{model:digraph-advanced}
    For a directed multigraph $D$, let $F\sseq E(\bbd)$ be chosen uniformly at random amongst all such subsets.
    Keep precisely these edges, and discard the others.
\end{model}

As stated in $\Cref{thm:main}$, all of $\conj{E_4^T,\cH}$, $\conj{E_5,\cH}$, $\conj{E_6^{T,F},D}$, and $\conj{E_7,D}$ are false for suitable hypergraphs $\cH$, digraphs $D$, subsets of vertices $T$ and subsets of edges $F$.
In the next section, we give constructions of such graphs.


\section{Construction of counterexamples}
\label{sec:constructions}

We now construct the counterexamples for our models of percolation on hypergraphs and digraphs.

\subsection{Hypergraphs}
\label{subsec:hyper}

We first construct a hypergraph $\cH_4$ and subset $T_4\sseq V(\cH_4)$ of its vertices which contradicts the bunkbed conjecture for model $E_4^{T_4}$.
In fact, a small modification of the hypergraph dual of $G_2$ suffices, and the resulting hypergraph $\cH_4$ is shown in \Cref{fig:counterexample-hypergraph}.

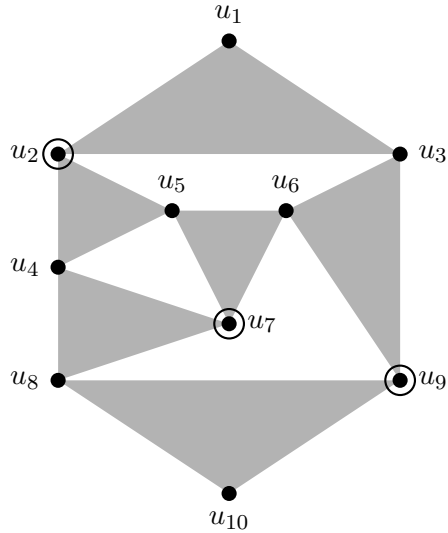
\begin{figure}[htbp]
    \centering
    \usetikzlibrary{shapes.geometric} 
\begin{tikzpicture}[scale=1.5,vertex/.style={circle, fill, inner sep=2pt}]
    \node[vertex, label=above:$u_1$] (v1) at (1.5, 4) {};
    \node[vertex, label=left:$u_2$] (v2) at (0, 3) {};
    \node[vertex, label=right:$u_3$] (v3) at (3, 3) {};
    \node[vertex, label=left:$u_4$] (v4) at (0, 2) {};
    \node[vertex, label=above:$u_5$] (v5) at (1, 2.5) {};
    \node[vertex, label=above:$u_6$] (v6) at (2, 2.5) {};
    \node[vertex, label=right:$u_7$] (v7) at (1.5, 1.5) {};
    \node[vertex, label=left:$u_8$] (v8) at (0, 1) {};
    \node[vertex, label=right:$u_9$] (v9) at (3, 1) {};
    \node[vertex, label=below:$u_{10}$] (v10) at (1.5, 0) {};
    
    \begin{scope}[fill opacity=0.3]
    \fill[black] (v1.center) -- (v2.center) -- (v3.center) -- cycle; 
    \fill[black] (v2.center) -- (v4.center) -- (v5.center) -- cycle; 
    \fill[black] (v3.center) -- (v6.center) -- (v9.center) -- cycle; 
    \fill[black] (v4.center) -- (v7.center) -- (v8.center) -- cycle; 
    \fill[black] (v5.center) -- (v6.center) -- (v7.center) -- cycle; 
    \fill[black] (v8.center) -- (v9.center) -- (v10.center) -- cycle; 
    \end{scope}

    \draw[thick] (v2.center) circle (0.13);
    \draw[thick] (v7.center) circle (0.13);
    \draw[thick] (v9.center) circle (0.13);
\end{tikzpicture}
    \caption{The basic hypergraph counterexample, $\cH_4$. All hyperedges have size 3, and shown by shaded grey triangles. Vertices $u_2,u_7,u_9$ are conditioned to have vertical edges, and are shown highlighted with extra circles.}
    \label{fig:counterexample-hypergraph}
\end{figure}

While it is not too tedious to manually check that $\cthree$ is false, we now deduce this from the previous results.
Indeed, the hypergraph $\cH_4$ is constructed in the following manner:
\begin{itemize}
    \item Let $\cH'$ be the hypergraph dual of $G_2$; that is, $V(\cH')=E(G_2)$, and for each $v\in V(G_2)$, we add a hyperedge $e_v$ to $E(\cH')$, connected to the vertices of $\cH'$ formed from those edges $e\in E(G_2)$ such that $v\in e$.
    \item Collapse the edges in $\cH'$ which came from posts in $G_2$ down to single points to form $\cH''$, and again condition these vertices to be posts, as shown in \Cref{fig:counterexample-hypergraph}.
    Let $T_4=\set{u_2,u_7,u_9}$ be this set of posts.
    \item Add two extra vertices to $\cH''$, labelled $u_1$ and $u_{10}$ in edges $e_{v_1}$ and $e_{v_9}$ respectively, to form $\cH_4$.
    These will be the new start and end vertices for our percolation.
\end{itemize}

The following claim then verifies that $\cthree$ is false.

\begin{claim}
    For $G_2$ and $\cH_4$ as shown in \Cref{fig:counterexample} and \Cref{fig:counterexample-hypergraph} respectively, for all $i,j\in\set{0,1}$,
    \begin{align}
    \label{eq:equiv-1-3}
        \Prob(u_1^{(i)}\biconn{\mthree}u_{10}^{(j)})=\Prob(v_1^{(i)}\biconn{\mone}v_9^{(j)}).
    \end{align}
\end{claim}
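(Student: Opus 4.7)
The plan is to exhibit a measure-preserving bijection $\phi\from E_4^{T_4}(\cH_4) \to E_2^{T_2}(G_2)$ and to show that, for every $F\in E_4^{T_4}(\cH_4)$ and every $i,j\in\set{0,1}$, we have $u_1^{(i)}\biconn{F}u_{10}^{(j)}$ in $\bbh_4$ if and only if $v_1^{(i)}\biconn{\phi(F)}v_9^{(j)}$ in $\bbg_2$. Since both models put the uniform measure on their respective families, this would immediately yield \eqref{eq:equiv-1-3}.

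The bijection itself is the natural one suggested by the construction. For any $F \in E_4^{T_4}(\cH_4)$, exactly one of $e_v^{(0)}$ or $e_v^{(1)}$ lies in $F$ for each non-post $v \in V(G_2)\setminus T_2$, and I would declare the corresponding copy $v^{(i)}$ to be open in $\phi(F)$; vertices in $T_2$ have both copies open by definition of $E_2^{T_2}(G_2)$, matching the vertical edges forced at vertices of $T_4$ in $E_4^{T_4}(\cH_4)$.

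The core of the argument is then to verify the connectivity equivalence above. I would exploit the duality inherent in the construction of $\cH_4$: its vertices correspond to edges of $G_2$ (with all edges incident to a given post identified to a single vertex), together with the added source $u_1\in e_{v_1}$ and target $u_{10}\in e_{v_9}$, while its hyperedges correspond to non-post vertices of $G_2$. A $\bbg_2$-path from $v_1^{(i)}$ to $v_9^{(j)}$ traverses a sequence of $G_2$-edges at specified levels, possibly switching level via vertical edges at posts; the corresponding sequence of $\cH_4$-vertices forms a $\bbh_4$-path, where consecutive edge-vertices share the common $G_2$-endpoint and transition through the corresponding hyperedge at the appropriate level in $F$, while a vertical switch at a post in $\bbg_2$ is realised by the vertical edge at the collapsed-post vertex in $\bbh_4$. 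The converse direction is analogous: a $\bbh_4$-path determines a sequence of hyperedges with associated levels, which yields the shadow $\bbg_2$-path.

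The main obstacle is the bookkeeping at collapsed posts: a post $p \in T_2$ of $G_2$ does not contribute a hyperedge in $\cH_4$ but rather a single vertex formed by identifying all the $G_2$-edges incident to $p$. One must verify that a vertical transition through $p$ in $\bbg_2$ maps precisely to using the (forced) vertical edge at the collapsed-post vertex in $\bbh_4$, and that the levels line up correctly on either side of the switch. The handling of $u_1$ and $u_{10}$ is more routine: $u_1$ lies only in the hyperedge $e_{v_1}$ and in no vertical edge, so any path from $u_1^{(i)}$ must begin by passing through $e_{v_1}^{(i)}\in F$, matching the condition that $v_1^{(i)}$ is open in $\phi(F)$, and similarly for $u_{10}$.
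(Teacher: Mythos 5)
Your proposal is correct and follows essentially the same route as the paper: the paper's proof also rests on the observation that the hypergraph-dual construction gives a uniform-measure-preserving correspondence between configurations of $E_4^{T_4}(\cH_4)$ and $E_2^{T_2}(G_2)$ under which paths from $u_1^{(i)}$ to $u_{10}^{(j)}$ correspond exactly to paths from $v_1^{(i)}$ to $v_9^{(j)}$, with the collapsed posts and the pendant vertices $u_1,u_{10}$ handled as you describe. If anything, your write-up makes the path bookkeeping at the collapsed posts more explicit than the paper does.
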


\begin{proof}
    Upon switching to the hypergraph dual $\cH'$ of $G_2$, collapsing $e_{v_2},e_{v_7}$, and $e_{v_9}$ down to points and conditioning that they be posts, and percolating on edges rather than vertices, the model $\mthree$ is almost identical to $\mone$.
    Indeed, for any $x,y\in V(G_2)\setminus T_2$, we have
    $$\Prob(x\biconn{\mone}y)=\Prob(e_x\biconn{\mthree}e_y).$$
    We then add vertices $u_1$ and $u_{10}$ to measure connections between $e_{v_1}$ and $e_{v_9}$ respectively within the confines of model of $\cthree$.

    Thus it is immediate that equality \eqref{eq:equiv-1-3} holds, and so $\cthree$ is false for $T_4=\set{u_2,u_7,u_9}$ and percolation from $u_1$ to $u_{10}$.
\end{proof}

Our next task is to generalise the construction of $\cH_4$ to produce a hypergraph $\cH_5$ for which the unconditioned statement $\cfour$ is false.

We may prove this following similar lines to the case of site percolation; we attach gadgets to vertices in $T_4$ which force those vertices to act as if they were conditioned to be posts with probability close to 1.
We then apply \Cref{lem:postless-path} to show that we are often in a situation equivalent to model $\mthree$.
We now present this argument in detail.

Construct $\cH_5$ by taking $\cH_4$ and adding $3k$ vertices (for integer $k>0$ to be determined later), $w_{t,i}$ attached to $u_t$ for each $t\in T_4$ and $i\in [k]$.
In other words, $k$ pendant 2-edges are attached to each of $u_2,u_7$, and $u_9$.
We will consider $\cH_4$ as a subhypergraph of $\cH_5$.

Note that if, for some $t\in T_4$ and $i\in[k]$, all three of the 2-edges $u_t^{(0)}w_{t,i}^{(0)}$, $w_{t,i}^{(0)}w_{t,i}^{(1)}$, and $w_{t,i}^{(1)}u_t^{(1)}$ are present, then $u_t$ is a quasi-post.
We now show that the probability of this occurring is close to 1 for $k$ large.

\begin{claim}
\label{claim:hypergraph-computation}
    Let $A$ be the event that for each $t\in T_4$, either the edge $u_t^{(0)}u_t^{(1)}$ is retained, or there is some $i\in[k]$ such that the 2-edges $u_t^{(0)}w_{t,i}^{(0)}$, $w_{t,i}^{(0)}w_{t,i}^{(1)}$, and $w_{t,i}^{(1)}u_t^{(1)}$ are all retained (and thus each $t\in T_4$ is a quasi-post), then $\Prob(A)\geq 1-\frac{3}{2}\bigl(\frac{7}{8}\bigr)^k$.
\end{claim}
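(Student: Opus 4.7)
The plan is to observe that event $A$ is the intersection of three independent events, one per $t \in T_4$, and to bound the failure probability of each via a short independence calculation followed by a union bound.

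First I would fix a single $t \in T_4$ and identify the relevant edges in $\bbh_5$. Under model $E_5(\cH_5)$ each edge of $\bbh_5$ is retained independently with probability $1/2$. The vertical edge $u_t^{(0)}u_t^{(1)}$ is retained with probability $1/2$, and for each $i \in [k]$ the three 2-edges $u_t^{(0)}w_{t,i}^{(0)}$, $w_{t,i}^{(0)}w_{t,i}^{(1)}$, $w_{t,i}^{(1)}u_t^{(1)}$ are all retained with probability $(1/2)^3 = 1/8$, so the probability they are not all retained is $7/8$. All of these edges are disjoint, hence these events are mutually independent, both across $i$ and with the vertical edge. Writing $B_t$ for the event that $t$ fails to be a quasi-post (the complement of the condition in the claim restricted to this $t$), independence gives
\begin{equation*}
    \Prob(B_t) = \tfrac{1}{2} \cdot \left(\tfrac{7}{8}\right)^k.
\end{equation*}

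Second, I would take the union bound over the three vertices of $T_4$. Because the three gadgets attached to $u_2$, $u_7$, $u_9$ involve disjoint edges, we in fact have $\Prob(A^c) = \Prob(\bigcup_{t \in T_4} B_t) \leq \sum_{t \in T_4} \Prob(B_t) = \tfrac{3}{2}(7/8)^k$, which rearranges to the stated bound $\Prob(A) \geq 1 - \tfrac{3}{2}(7/8)^k$.

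There is essentially no obstacle here; the only point to be careful about is verifying that the edges involved in the definitions of the $B_t$'s are genuinely disjoint (so independence applies and the union bound is tight enough), which is immediate from the fact that the $w_{t,i}$ are pendant vertices attached only to $u_t$. This bound will then be combined in the subsequent argument with \Cref{lem:postless-path} and a wall-event analysis (in the spirit of \Cref{subsec:unconditioned-site}) to transfer the counterexample from $E_4^{T_4}(\cH_4)$ to $E_5(\cH_5)$ for sufficiently large $k$.
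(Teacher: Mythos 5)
Your proposal is correct and follows essentially the same argument as the paper: the same per-gadget independence computation giving failure probability $\tfrac{1}{2}\bigl(\tfrac{7}{8}\bigr)^k$ for each $t\in T_4$. The only (immaterial) difference is the final step, where you use a union bound over the three posts while the paper multiplies the three independent success probabilities and then applies $(1-x)^3\geq 1-3x$; both yield the stated bound.
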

\begin{proof}
    The probability that all three of the 2-edges $u_t^{(0)}w_{t,i}^{(0)}$, $w_{t,i}^{(0)}w_{t,i}^{(1)}$, and $w_{t,i}^{(1)}u_t^{(1)}$ are present is $1/8$, and this is independent of other edges in $\cH_5$.
    There is also a $1/2$ chance that the edge $u_t^{(0)}u_t^{(1)}$ is present, and thus the probability that $u_t$ is a quasi-post for a specific $t$ is at least $1 - \frac{1}{2}\bigl(\frac{7}{8}\bigr)^k$.
    The events we have counted are independent for different values of $t\in T_4$, and so
    \begin{align*}
        \Prob(A) \geq \Bigl[1-\frac{1}{2}\bigl(\frac{7}{8}\bigr)^k\Bigr]^3
        \geq 1-\frac{3}{2}\bigl(\frac{7}{8}\bigr)^k,
    \end{align*}
    as required.
\end{proof}

Now we apply \Cref{lem:postless-path} to the event $B$ that either some $e\in E(\cH_4)\sseq E(\cH_5)$ has either both or neither of $e^{(0)}$ and $e^{(1)}$ present, or that some $v\in V(\cH_4)\setminus T_4$ has the 2-edge $v^{(0)}v^{(1)}$ present.
Note that $A\inter B$ is a $(u_1,u_{10})$-wall event, as if $f^{(0)}$ and $f^{(1)}$ are both retained for some hyperedge $f\in E(\cH_4)$, then all vertices of $f$ are quasi-post, as $f$ contains a vertex from $T_4$.
We thus find
\begin{align*}
    \Prob(u_1^{(0)}\connfour u_{10}^{(0)}| A,B) = \Prob(u_1^{(0)}\connfour u_{10}^{(1)} | A,B).
\end{align*}

Furthermore, conditioning on $A\inter B^C$ tells us that vertices in $V(\cH_4)\setminus T_4$ are not posts, and edges in $E(\cH_4)$ appear in precisely one of the upper or lower bunks.
Therefore
$$\Prob(u_1^{(i)}\connfour u_{10}^{(j)} | A,B^C) = \Prob(u_1^{(i)}\connthree u_{10}^{(j)}).$$

We may now proceed exactly as in the previous section, and discover that it again suffices to prove inequality \eqref{eq:target}.
Indeed, it is easy to compute that $\Prob(B^C)=2^{-13}$, and so together with \Cref{claim:hypergraph-computation}, we find by elementary algebra that inequality \eqref{eq:target} holds for $k > \log_{8/7}(2\cdot 2^{18} + 3/2)\approx 101.663$.
Thus we may take $k=102$ to find a hypergraph $\cH_5$ on $10 + 3k = 316$ vertices for which $\cfour$ is false.

\subsection{Directed graphs}
\label{subsec:directed}

Finally, we consider directed graphs.
We produce the digraph $D_6$ from hypergraph $\cH_4$, and then construct from this the digraph $D_7$.

When building $D_6$ we condition not only on some set of vertices $T_6$ which will be posts, but on some set $F_6\sseq E(D_6)$ of edges which appear in both the upper and lower bunks, with all edges not in $F_6$ appearing in precisely one of the two bunks.
We will refer to the edges in $F_6$ as \emph{double edges}.

Our method for constructing $D_6$ from $\cH_4$ is to replace a 3-edge $f=uvw$ with the digraph described as follows, which is also shown in \Cref{fig:hyper-to-directed}.

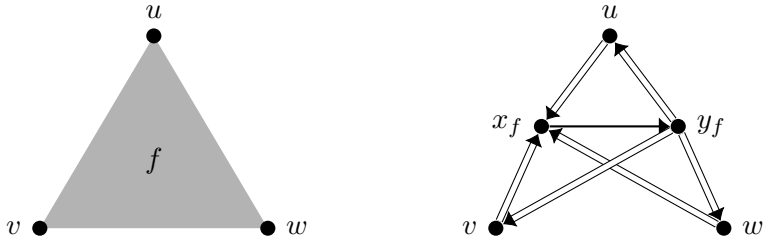
\begin{figure}[htbp]
    \usetikzlibrary{shapes.geometric} 
\usetikzlibrary{arrows.meta} 
\begin{tikzpicture}[scale=1.5,vertex/.style={circle, fill, inner sep=2pt}]
    \node[vertex, label=above:$u$] (u1) at (1, 1.7) {};
    \node[vertex, label=left:$v$] (v1) at (0, 0) {};
    \node[vertex, label=right:$w$] (w1) at (2, 0) {};
    \node (f) at (1,0.6) {$f$};
    
    \node[vertex, label=above:$u$] (u2) at (5, 1.7) {};
    \node[vertex, label=left:$v$] (v2) at (4, 0) {};
    \node[vertex, label=right:$w$] (w2) at (6, 0) {};
    \node[vertex, label=left:$x_f$] (x) at (4.4, 0.9) {};
    \node[vertex, label=right:$y_f$] (y) at (5.6, 0.9) {};
    
    \begin{scope}[fill opacity=0.3]
        \fill[black] (u1.center) -- (v1.center) -- (w1.center) -- cycle;
    \end{scope}

    \draw[thick,-{Latex[length=1.5mm, width=2.5mm]}] (x) -- (y);
    \draw[->,double,double distance=2pt,-{Latex[length=1.5mm, width=2.5mm]}] (u2) -- (x);
    \draw[->,double,double distance=2pt,-{Latex[length=1.5mm, width=2.5mm]}] (v2) -- (x);
    \draw[->,double,double distance=2pt,-{Latex[length=1.5mm, width=2.5mm]}] (w2) -- (x);
    \draw[->,double,double distance=2pt,-{Latex[length=1.5mm, width=2.5mm]}] (y) -- (u2);
    \draw[->,double,double distance=2pt,-{Latex[length=1.5mm, width=2.5mm]}] (y) -- (v2);
    \draw[->,double,double distance=2pt,-{Latex[length=1.5mm, width=2.5mm]}] (y) -- (w2);
\end{tikzpicture}
    \caption{How a hyperedge $f$ in $\cH_4$ is replaced by two vertices and several directed edges in $D_6$. All new edges except for $x_f y_f$ are double edges, and are conditioned to appear in both the upper and lower bunks.}
    \label{fig:hyper-to-directed}
\end{figure}

Add two new vertices $x_f,y_f$, along with directed edge $x_f y_f$.
Then add double edges $ux_f$, $vx_f$, $wx_f$, $y_fu$, $y_f v$, and $y_f w$ to the set $F_6$.
In this construction, the edge $x_f y_f$ acts in place of the hyperedge $f$; it can be seen that $u^{(0)}\biconn{} v^{(0)}\biconn{} w^{(0)}$ whenever $x_f y_f$ appears in the lower bunk, and $u^{(1)}\biconn{} v^{(1)}\biconn{} w^{(1)}$ whenever $x_f y_f$ appears in the upper bunk, exactly as is required for modelling a hypergraph.
The set $T_6$ is equal to $T_4$.

Thus, labelling the vertices of $D_6$ copied from $\cH_4$ with the same names $u_1,\dotsc,u_{10}$, we see that
$$\Prob(u_a^{(i)}\connfive u_b^{(j)}) = \Prob(u_a^{(i)}\connthree u_b^{(j)}),$$
for all $a,b\in [10]$ and all $i,j\in\set{0,1}$. In particular, $\cfive$ is false.\\

Thus it remains to construct directed multigraph $D_7$, and remove the conditioning of model $\mfive$; on $T_6$ and $F_6$, and the assumption that edges are present in only one bunk.
This argument follows very similar lines to those in the previous sections.

Construct $D_7$ by attaching to each post vertex $u_i\in T_6$ $k$ vertices $w_{i,1},\dotsc w_{i,k}$ (for $k$ to be determined later), along with directed edges $u_i w_{i,j}$ and $w_{i,j} u_i$ for each $i$ and $j$.
Replace each double edge $xy$ in $F_6$ with a set of $k$ parallel directed edges $e_{x,y,i}$ from $x$ to $y$ for $i\in [k]$.
Let $A$ be the event that all of the following occur:
\begin{itemize}
    \item For each $u_i\in T_6$, either the bidirectional vertical edge $u_i^{(0)} u_i^{(1)}$ is present, or for some $j\in [k]$, all four of the directed edges $u_i^{(0)} w_{i,j}^{(0)}$, $w_{i,j}^{(0)} u_i^{(0)}$, $u_i^{(1)} w_{i,j}^{(1)}$, and $w_{i,j}^{(1)} u_i^{(1)}$, and bidirectional vertical  edge $w_{i,j}^{(0)} w_{i,j}^{(1)}$ are present. In either case, $u_i$ is a quasi-post.
    \item For each $xy\in F_6$, there is some $i\in [k]$ such that edge $e_{x,y,i}^{(0)}$ is present, and some $j\in[k]$ such that edge $e_{x,y,j}^{(1)}$ is present.
\end{itemize}

Let $B$ be the event that any vertex in $V(D_6)\setminus T_6$ is a post, or that any edge in $E(D_6)\setminus F_6$ is present in both the upper and lower bunks.
Note that $A\inter B$ is a $(u_1,u_{10})$-wall event, and note further that $E_7$ conditioned on $A$ and $B^C$ is equivalent to $\mfive$.
Thus, as in previous cases,
$$\Prob(u_a^{(i)}\connsix u_b^{(j)}|A,B^C) = \Prob(u_a^{(i)}\connfive u_b^{(j)}) = 2^{-6}.$$
Following identical arguments to those before, it again suffices to prove that inequality \eqref{eq:target} holds, and all it is left to do is compute $\Prob(A^C)$ and $\Prob(B^C)$.

There are 19 vertices in $V(D_6)\setminus T_6$, and 6 edges in $E(D_6)\setminus F_6$, and so $\Prob(B^C)=2^{-25}$. Further elementary computations give us that
$$\Prob(A) = \Bigl(1-\frac{1}{2}\bigl(\frac{31}{32}\bigr)^k\Bigr)^3 \bigl(1-2^{-k}\bigr)^{72}.$$
It can then be computed that inequality \eqref{eq:target} holds for those $k\geq 690$, and so we take $k=690$.
We have thus found our directed multigraph $D_7$ on $22+3k=2092$ vertices with $6+36k = 24846$ edges for which $\csix$ is false, completing the proof of \Cref{thm:main}.


\section{Conclusion}
\label{sec:conclusion}

We have shown that the bunkbed conjecture does not hold when generalised to site percolation or hypergraphs.
The situation with directed graphs is a little more complex, as we have only dealt with the case of multigraphs with vertical edges which are bidirectional.
In the interest of considering fully directed graphs, we define the following model:

\begin{model}[$E_8$; fully directed graphs] 
    For a simple digraph $D$, construct the bunkbed graph $\bbd$ by adding both directed edges $v^{(0)}v^{(1)}$ and $v^{(1)}v^{(0)}$ for all $v\in V(D)$. 
    A subset of edges $F\sseq E(\bbd)$ is selected uniformly at random, and precisely the edges in $F$ are retained.
\end{model}

We conjecture that it is not necessary that our example in \Cref{subsec:directed} has multiple edges and undirected vertical edges.

\begin{conjecture}
    There is a simple digraph $D$ for which $\conj{E_8,D}$ is false.
\end{conjecture}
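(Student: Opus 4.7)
The plan is to adapt the counterexample $D_7$ from \Cref{subsec:directed} into a simple digraph $D_8$, so that essentially the same wall-event argument yields a counterexample in model $E_8$. The two differences between $E_7$ and $E_8$ that must be addressed are: first, $D_7$ contains parallel edges and so is not simple; second, the vertical edges in $E_7$ are bidirectional and sampled once, while in $E_8$ each vertex $v$ has two independent directed vertical edges $v^{(0)}v^{(1)}$ and $v^{(1)}v^{(0)}$.

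To remove multi-edges, I would replace each bundle of $k$ parallel directed edges $e_{x,y,1},\dotsc,e_{x,y,k}$ of $D_7$ by $k$ disjoint directed paths $x\to z_{x,y,i}\to y$ of length two, introducing a fresh intermediate vertex $z_{x,y,i}$ for each. In $E_8$ such a path is \emph{active} in a given bunk when both of its edges appear there, which happens with probability $1/4$; this only changes constants. To handle the vertical edges, note that a vertex becomes a post in $E_8$ (both directed vertical edges present) with probability $1/4$ rather than $1/2$, and in the quasi-post gadget from \Cref{subsec:directed} the bidirectional vertical edge at each $w_{i,j}$ is replaced by the conjunction of both directed edges at $w_{i,j}$; once again only constants change, and the gadget still makes $u_i$ a quasi-post with probability exponentially close to $1$ in $k$.

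With the resulting digraph $D_8$ in hand, define $A$ to be the event that every $u_i\in T_6$ is a quasi-post (via one of its attached gadgets or directly via both vertical edges), and $B$ to be the event that some vertex in $V(D_6)\setminus T_6$, or some new intermediate vertex $z_{x,y,i}$, has both directed vertical edges present, or that some edge in $E(D_6)\setminus F_6$ appears in both bunks. Under $A\inter B^C$, the ``core'' configuration on $D_6$ is distributed exactly as in model $E_6^{T_6,F_6}$, so the difference $\prob{u_1^{(0)}\conn{E_8(D_8)}u_{10}^{(1)} \given A\inter B^C} - \prob{u_1^{(0)}\conn{E_8(D_8)}u_{10}^{(0)} \given A\inter B^C}$ is a fixed positive constant; while $A\inter B$ is a $(u_1,u_{10})$-wall event, so \Cref{lem:postless-path} yields equality there. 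Bounding $\prob{A^C}$ by an exponentially small quantity (as in \Cref{claim:hypergraph-computation}) and choosing $k$ large enough, an inequality analogous to \eqref{eq:target} then drives the counterexample.

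The main obstacle I anticipate is combinatorial rather than conceptual: one must verify that passing through a new intermediate vertex $z_{x,y,i}$ under $A\inter B^C$ does not allow unintended bunk-crossings (which is ensured by $z_{x,y,i}$ not being a post, as $B^C$ forces), and that the $(u_1,u_{10})$-wall structure is preserved by the gadget modifications. Given that an almost identical argument is already worked out in detail in \Cref{subsec:directed}, these checks should go through with only cosmetic changes, producing an explicit (if impractically large) simple digraph $D_8$ that refutes $\conj{E_8,D}$.
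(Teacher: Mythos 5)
First, a point of context: the paper does not prove this statement --- it is stated as a \emph{conjecture} precisely because the author could not remove the multi-edges and the bidirectional vertical edges from $D_7$, so there is no proof of record to compare yours against. Your proposal runs into a genuine gap at exactly the point where the difficulty lies. When you replace a bundle of $k$ parallel edges from $x$ to $y$ by $k$ internally disjoint directed paths $x\to z_{x,y,i}\to y$, each new vertex $z_{x,y,i}$ acquires, by the definition of $E_8$, its own pair of directed vertical edges. A \emph{single} one of these, say $z_{x,y,i}^{(0)}\to z_{x,y,i}^{(1)}$, together with the horizontal edges $x^{(0)}z_{x,y,i}^{(0)}$ and $z_{x,y,i}^{(1)}y^{(1)}$, yields a bunk-crossing connection $x^{(0)}\to y^{(1)}$ that has no counterpart in $E_6^{T_6,F_6}(D_6)$. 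Your event $B$ only forbids $z_{x,y,i}$ from being a post (both vertical edges present), so $B^C$ does not exclude this, and the claimed equivalence of the core configuration under $A\inter B^C$ with model $E_6^{T_6,F_6}$ fails. Worse, each such crossing occurs independently with probability $1/8$ per intermediate vertex, so for a fixed double edge the probability that some crossing $x^{(0)}\to y^{(1)}$ exists is $1-(7/8)^k\to 1$; you can neither ignore these events nor absorb them into $B$, since forbidding all vertical edges at the $36k$ intermediate vertices makes $\Prob(B^C)$ decay like $2^{-72k}$, which is far smaller than $\Prob(A^C)\geq (3/4)^k$ (the chance that no path for a given double edge is active in the lower bunk), destroying inequality \eqref{eq:target}. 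In the $k\to\infty$ limit every double edge effectively becomes a full bipartite connection between $\set{x^{(0)},x^{(1)}}$ and $\set{y^{(0)},y^{(1)}}$, which changes the model qualitatively rather than by a constant.

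Two further points would need attention even if the above were repaired. The same ``one directed vertical edge'' phenomenon occurs at the core vertices of $V(D_6)\setminus T_6$: your $B^C$ forbids them from being posts but still permits one-way crossings there; this instance is harmless, as strengthening $B$ to forbid \emph{any} vertical edge at these finitely many vertices costs only a constant factor in $\Prob(B^C)$, but it must be done. More seriously, \Cref{lem:postless-path} is proved via flips that move only horizontal edges; in $E_8$ a path inside the flipped region may traverse a single directed vertical edge whose reverse is absent, so the flipped path need not exist in $F'$. The lemma would have to be re-proved for $E_8$ with a flip that additionally reverses the directed vertical edges inside the flipped set; this looks feasible but is nowhere addressed in your argument, and without it the wall-event step has no justification.
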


We furthermore repeat the following question of Leander \cite{leander2009thesis} in our context. 

\begin{question}
\label{question:acyclic}
    Do $\conj{E_6^{T,F},D}$ and $\conj{E_7,D}$ hold for all acyclic digraphs $D$?
\end{question}

It seems unclear to the author whether the answer to \Cref{question:acyclic} should be positive or negative.
Indeed, this problem is far more amenable to inductive techniques than the case of graphs with cycles, but actually performing this induction seems far from trivial.

However, one case that does seem approachable, though was found by the author to be more difficult than expected, is expressed by the following conjecture.

\begin{conjecture}
    $\conj{E_7,T_n}$ holds for $T_n$. the transitive tournament on $n$ vertices.
\end{conjecture}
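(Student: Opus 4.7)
The plan is to proceed by strong induction on $n$, simultaneously with a strengthening of the hypothesis; the base cases $n\le 2$ follow by direct enumeration. First observe that the probability of any path in $\tilde{T_n}$ from $u^{(0)}$ to $v^{(j)}$ is unchanged by restricting to the induced sub-tournament on $[u,v]$: horizontal edges of $\tilde{T_n}$ go strictly forward in index and vertical edges preserve the index, so every such path stays within $[u,v]$. Thus we may assume $u=1$ and $v=n$.

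The core reduction is to condition on the vertical edge at $n$ and on the incoming horizontal edges $\alpha_{wn}^{(b)}$ for $w<n$, $b\in\{0,1\}$. If the vertical edge at $n$ is present then $n^{(0)}$ and $n^{(1)}$ lie in a common strongly connected component (conditional on this edge), so the two conditional probabilities agree. Otherwise, set $A^{(b)}:=\{w<n : w^{(b)}\text{ is reachable from }1^{(0)}\text{ in }\tilde{T_{n-1}}\}$, which is well-defined since no directed path in $\tilde{T_n}$ to a vertex of index $<n$ passes through $n$. A short computation, using conditional independence of the $\alpha_{wn}^{(b)}$ and the vertical edge at $n$ from the rest of the graph, yields
\[
\Prob(1^{(0)}\to n^{(0)})-\Prob(1^{(0)}\to n^{(1)}) \;=\; \tfrac{1}{2}\,\Expec\bigl[2^{-|A^{(1)}|}-2^{-|A^{(0)}|}\bigr].
\]
It therefore suffices to show $\Expec[2^{-|A^{(0)}|}]\le\Expec[2^{-|A^{(1)}|}]$ in $\tilde{T_{n-1}}$.

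This last inequality is precisely $\conj{E_7,T_n}$ after conditioning on the absence of a vertical edge at a freshly added top vertex (whose bunk-$b$ copy is ``probed'' via the factor $2^{-|A^{(b)}|}$), so a naive induction on $n$ is circular. I would instead prove by induction the strengthening: for every $m\le n$, every $U\sseq V(T_m)$, and every $v\in V(T_m)$,
\[
\Prob\bigl(\exists w\in U:\ w^{(0)}\to v^{(0)}\bigr)\;\ge\;\Prob\bigl(\exists w\in U:\ w^{(0)}\to v^{(1)}\bigr)
\]
in $\tilde{T_m}$. The $|U|=1$ case recovers the original statement, and the reduction above needs exactly this form with $U$ the (random) bunk-$0$ in-neighborhood of $n$. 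The hard part---and the main obstacle---is closing this strengthened induction: since the events $\{w^{(0)}\to v^{(j)}\}$ for different $w\in U$ share random edges in $\tilde{T_m}$, the statement is not of product form and does not reduce term-by-term to the $|U|=1$ case. The approach I would try is to further condition on the vertical edge at vertex $1$ and on the bunk-$0$ out-neighborhood of $1$, peeling off the smallest-index vertex so as to simultaneously reduce $m$ by one and modify $U$; one then hopes to close an induction on $(m,|U|)$ that exploits the density of the tournament (every pair $i<j$ being adjacent) via an FKG-type comparison. Making this last step precise is the essence of the problem, matching the author's remark that this special case proved ``more difficult than expected''.
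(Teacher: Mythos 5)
There is nothing in the paper to compare against here: the statement you are proving is stated in \Cref{sec:conclusion} as an open \emph{conjecture}, with the author explicitly remarking that this case ``was found by the author to be more difficult than expected.'' The paper contains no proof, so your proposal cannot match or diverge from one; it can only be judged on its own terms, and on those terms it is not a proof but a (correctly diagnosed) reduction to an open problem.

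The parts you do carry out are sound. Since $T_n$ is transitive, every directed path in $\bbq$ (for $Q=T_n$) is non-decreasing in vertex index, so restricting to $[u,v]$ is legitimate, and $n$ is a sink, so conditioning on the vertical edge at $n$ and on the in-edges of $n^{(0)}$ and $n^{(1)}$ gives exactly your identity: the difference of the two connection probabilities equals $\tfrac12\Expec\bigl[2^{-\abs{A^{(1)}}}-2^{-\abs{A^{(0)}}}\bigr]$, where $A^{(b)}$ is the set of $w<n$ with $w^{(b)}$ reachable from $1^{(0)}$ in the bunkbed over $[n-1]$. The genuine gap is the final inequality $\Expec\bigl[2^{-\abs{A^{(0)}}}\bigr]\le\Expec\bigl[2^{-\abs{A^{(1)}}}\bigr]$, equivalently the ``set-target'' statement $\Prob(\exists w\in S\st 1^{(0)}\to w^{(0)})\ge\Prob(\exists w\in S\st 1^{(0)}\to w^{(1)})$ for fixed $S\sseq[n-1]$. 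As you note, this is not implied termwise by the single-vertex statement (the events for different $w$ share edges), and it is not an instance of the induction hypothesis either: adding a sink whose in-neighbourhood is a \emph{fixed} set $S$ leaves the class of transitive tournaments, so the strengthened claim is really the bunkbed conjecture for a larger class of acyclic digraphs (cf.\ \Cref{question:acyclic}), not a smaller tournament. The FKG/Harris direction you invoke does not resolve it: positive association bounds $\Prob(\bigcup_w E_w)$ from the same side in both bunks, so it yields no comparison between them. Everything before this point is routine; everything after it is the conjecture itself, so the proposal should be regarded as an unproved reduction rather than a proof.
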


We found some positive results in \Cref{subsec:positive} concerning the case of site percolation for some very simple classes of graphs.
It would be interesting to consider more complex graphs, with a view towards ascertaining whether or not the bunkbed conjecture failing for site percolation is in general the exception or the rule.
Indeed, one may ask the following (perhaps optimistic) question.

\begin{question}
    What is the probability that the assertion in the bunkbed conjecture for site percolation $\conj{E_1,G}$ holds for $G\sim G(n,p)$ the Erd\H{o}s-R\'{e}nyi random graph for $n$ large?
    To what extent does this probability depend on the behaviour of $p$ as a function of $n$?
\end{question}

There are of course many other classes of graph one could consider; graphs of diameter $2$ (or simply of radius 1), and highly symmetric graphs (e.g. those with transitive automorphism group) would seem to be natural candidates for investigation.

Returning now to the original bunkbed conjecture $\conj{E_0,G}$, while our results do not directly say anything about this problem, they do cast doubt on whether the conjecture is just as likely to be true as the literature often claims.
Indeed, much of the same intuition which tells us that $\conj{E_0,G}$ should hold also goes through in the cases of the other models considered here, which -- as we have shown -- are false.
Any proof of the bunkbed conjecture would thus need to make use of the fact that it is working with undirected, 2-uniform graphs.

As discussed in the introduction, it has been shown \cite{hollom2024new,hutchcroft2023bunkbed} that the bunkbed conjecture holds for any fixed graph $G$ in the $p\uparrow 1$ limit.
However, it is not hard to show that our methods can produce counterexamples in the setting of models $E_1$ and $E_5$ even when the probability $p$ tends to $1$.
Thus these constructions cannot directly be transformed into counterexamples to the original bunkbed conjecture, and some new idea would be needed here.

We close by stating the bunkbed conjecture in the form of the following question.

\begin{question}
\label{question:bunkbed}
    Does $\czero$ hold for every graph $G$ and set $T\sseq V(G)$?
\end{question}

In light of the results presented here, the author is far from convinced that the answer to \Cref{question:bunkbed} is positive.


\section{Acknowledgement}
The author would like to thank B\'{e}la Bollob\'{a}s for his thorough reading of the manuscript and many valuable comments.


\bibliographystyle{abbrvnat}  
\renewcommand{\bibname}{Bibliography}
\bibliography{main}


\end{document}